\newtheorem{theorem}{Theorem}
\newtheorem*{thmA*}{Theorem A}
\newtheorem*{thmB*}{Theorem B}
\newtheorem*{lemA*}{Lemma A}
\newtheorem{definition}{Definition}
\newtheorem{lemma}{Lemma}
\newtheorem{remark}{Remark}
\newtheorem{Winfree Model}{Winfree Model}
\def\be{\begin{equation}}
\def\ee{\end{equation}}
\numberwithin{equation}{section} \numberwithin{theorem}{section}
\numberwithin{figure}{section}
\begin{document}
\bibliographystyle{siam}

\title[ ]
{ On the synchronization theory of Kuramoto oscillators under the effect of inertia  }

\author[C. Hsia]{Chun-Hsiung Hsia}
\address[CH]{Institute of Applied Mathematical Sciences, National Taiwan University, Taipei, Taiwan}
\email{willhsia@math.ntu.edu.tw}
\author[C.-Y. Jung]{Chang-Yeol Jung}
\address[CJ]{Department of Mathematical Sciences, Ulsan National Institute of Science and Technology, Ulsan, Korea}
\email{cjung@unist.ac.kr}
\author[B. Kwon]{Bongsuk Kwon}
\address[BK]{Department of Mathematical Sciences, Ulsan National Institute of Science and Technology, Ulsan, Korea}
\email{bkwon@unist.ac.kr}

\date{\today}

\subjclass{Primary:  35Q53   Secondary:  35Q86, 76B15}

\keywords{Kuramoto, synchronization}

\begin{abstract}
We investigate the synchronized collective behavior of the Kuramoto oscillators with inertia effect. Both the frequency synchronization for nonidentical case and the phase synchronization for identical case are in view. As an application of our general theory, we  show the unconditional frequency synchronization for the three-nonidentical-oscillator case.
\end{abstract}

\maketitle

\section{introduction}
%

The phenomena of synchronization are found in a variety of natural systems. 
The first reported observation of synchronization  dates back to the 17th century; a Dutch scientist, Christiaan Huygens has discovered in 1665 that two pendulum clocks hanging on the wall have always ended up swinging in exactly the opposite direction from each other.  
Since then, various synchronization phenomena have been observed. 
These include circadian rhythms, electrical generators, Josephson junction arrays, intestinal muscles, menstrual cycles, and fireflies.
Yet, the underlying mechanism of synchronization has remained a mystery.
Among a number of mathematical models, the ones proposed by Kuramoto \cite{Ku75} and Winfree \cite{Win67}, respectively have received  considerable attention. 
  
The Kuramoto model reads
\begin{equation}
\label{k1}
 \dot{\theta}_i = \omega_i + \frac{K}{N}\sum_{j=1}^{N} \sin(\theta_j - \theta_i), \,\, t>0, \,\, i=1,2, \cdots, N,
\end{equation}
where $\theta_i = \theta_i(t) \in \mathbb R$ is the phase of $i-$th oscillator with the intrinsic natural frequency $\omega_i$, $K$ is the coupling strength constant, $N$ is the number of the oscillators, and $\dot{\theta}_i$ denotes the first derivative of $\theta_i$. 
System \eqref{k1}  is first introduced by Yoshiki Kuramoto in \cite{Ku75, Ku84} to describe synchronization phenomena observed in the systems of chemical and biological oscillators.
This model makes assumptions  that 
(i) the oscillators are all-to-all, weakly coupled,
(ii) the interactions  between two oscillators  depends sinusoidally on the phase difference. 
 There have been extensive studies for the Kuramoto model, and we refer the interested readers to \cite{Er85,Er91,HW93} for motivation and physicality of the model. 
 Later on, more physical effects were incorporated in the Kuramoto model. Our interest in this article is to develop a general theory for the following Kuramoto model under the effect of inertia: 
 \begin{equation}
\label{kuramoto2nd}
m \ddot{\theta}_i + \dot{\theta}_i = \omega_i + \frac{K}{N}
\sum_{j=1}^{N}\sin(\theta_j - \theta_i),  \,\, t > 0, \,\, i
=1,2,3,\cdots N.
\end{equation}
  This model was first introduced by Ermentrout \cite{Er91} to explain the synchronization of fireflies of the Pteroptyx malaccae species. Although this model has received a lot of attention for years, the development of the mathematical analysis on the synchronization theory for \eqref{kuramoto2nd} is still in the early stage.

For our convenience of further discussion, we shall introduce several notations and functions here.
Let $$\Theta(t) := ( \theta_1(t), \theta_2(t), \theta_3(t), \cdots, \theta_N(t))$$  be a vector-valued function $\Theta: [0,\infty) \to \mathbb{R}^N$, and $$\Omega:=(\omega_1, \omega_2, \omega_3, \cdots, \omega_N) $$ be a constant vector in $\mathbb{R}^N$. For any positive integer $M\le N$, $\Theta_M(t)$ denotes the vector consisting of the first $M$ components of $\Theta(t),$ i.e., $$\Theta_M(t) := (\theta_1(t), \cdots,\theta_M(t) ),$$ and  $\Omega_M$ denotes, similarly, the first $M$ components of the natural frequency vector $\Omega.$ For any finite dimensional vector $X=(x_1, x_2, x_3, \cdots, x_N)$, we  define 
$$D(X) := \max_{i\ne j} (x_i -x_j).$$  
Let 
$\omega$ be the mean of $\Omega$, i.e., 
\begin{equation*}
\label{meanomega}
 \omega := \frac{1}{N}\sum_{i=1}^N \omega_i .
\end{equation*}


\begin{definition}\label{def1}
For both systems \eqref{k1} and \eqref{kuramoto2nd}, we say the Kuramoto oscillator ensemble $\Theta(t)$  achieves \emph{the complete-frequency synchronization asymptotically} if
$$\lim_{t\to\infty} \dot \theta_i(t) = \omega  \ \ \text{ for all } 1\le i \le N.$$
\end{definition}

\begin{remark} 
This means that the differences of the velocities of any two different oscillators tend to zero as time tends to infinity. In fact each velocity of the $i$-th oscillator tends to the same constant velocity, i.e. the oscillators will have the same frequencies in the limit of $t\to\infty$. 
\end{remark}
\begin{definition}\label{def2}
For both systems \eqref{k1} and \eqref{kuramoto2nd}, we say the Kuramoto oscillator ensemble $\Theta(t)$  achieves \emph{the complete phase synchronization asymptotically} if there exist integers  $k_{ij}$ such that
$$\lim_{t\to\infty} | \theta_i(t) - \theta_j(t) - 2k_{ij} \pi| = 0  \ \ \text{ for all } 1\le i< j \le N.$$
\end{definition}

\begin{remark}
It is easy to see that an obvious necessary condition for the systems \eqref{k1} or \eqref{kuramoto2nd} to exhibit the complete phase synchronization is that
\begin{equation}
\label{identical}
\omega_i = \omega   \ \ \text{ for all }  i = 1,2,3, \cdots, N.
\end{equation}
When \eqref{identical} holds, 
the oscillators are said to be identical.
\end{remark}

\begin{definition}
The diameter function $D(\Theta(t))$ is defined by 
\begin{equation}\label{dD}
D(\Theta(t) ):= \max_{1\le i,j\le N} ( \theta_i(t) - \theta_j(t) ).  
\end{equation}
\end{definition}
\begin{definition}
For $i \ne j$, we say the oscillators $\theta_i$ and $\theta_j$  collide at time $t_*>0$ if $$\theta_i(t_*) = \theta_j(t_*).$$ \end{definition}
\begin{remark}
\label{rm2}
For any initial data $\Theta(0)\in\mathbb{R}^N$ and $\dot{\Theta}(0) \in \mathbb{R}^N$, the solution $\Theta(t)$ to \eqref{kuramoto2nd}  is analytic for all $t\ge0$. It is easy to see that the diameter function $D(\Theta(t))$ is a continuous function. In general, $D(\Theta(t))$ may not be  $C^2$ nor $C^1$ for all $t >0$ because of the onsets of collisions between the oscillators. However, we can still  estimate  the rate of the change  of the diameter function by considering the first derivatives of all the representations of the diameter functions. For example,
when the collision occurs at a certain moment $t=t_0$, there are more than one representation of the diameter functions, i.e., 
 $$D(\Theta(t_0))=\theta_{i_k}(t_0)-\theta_{j_k}(t_0), \ \ k=1,2,\cdots,m,$$
 for some $m\ge2$. If we have the estimates 
\begin{equation*}
\dot{\theta}_{i_k}(t_0)-\dot{\theta}_{j_k}(t_0) < \delta <0,
\end{equation*}
for $k=1,2,\cdots,m$, we can conclude that $D(\Theta(t))$ is strictly decreasing in a small neighborhood of $t=t_0$. We shall use this observation frequently in the proof of our main results.
\end{remark}

The first main result of this article is the following  complete frequency synchronization theorem.
\begin{theorem}
\label{thm2}
Let $M>\frac{N}{2}$ be a positive integer and   $0<\beta<\alpha$ such that $$2\beta+ \alpha < \pi$$ and
$$
   \frac{M}{N}\sin(\frac{\alpha}{2} - \frac{\beta}{4}) \cos(\frac{\alpha}{2}+\frac{5}{8}\beta)-(1-\frac{M}{N})\cos(\frac{\alpha}{2} - \frac{\beta}{8})>0
.$$
Let $\mu$ and $\lambda$ be such that 
\begin{equation}
\label{mu2}
0< \mu \le \frac{M}{N}\sin(\frac{\alpha}{2} - \frac{\beta}{4}) \cos(\frac{\alpha}{2}+\frac{5}{8}\beta)-(1-\frac{M}{N})\cos(\frac{\alpha}{2} - \frac{\beta}{8}). 
\end{equation}
and $\lambda > \mu +2$. 
 Assume that 
\begin{align}
\label{mk2}& mK \le \beta \Big/ \Big(4(\lambda + \mu +2) \ln (\frac{ \lambda+ 2\mu +2 } { \mu } )\Big)\,\, \text{ and }\\
\label{domega2}& D(\Omega) < \mu K.
\end{align}
Then, the solution of \eqref{kuramoto2nd} with the initial conditions 
\begin{align}
\label{alpha2}&D(\Theta_{M}(0)) \le \pi -  \alpha - \beta  \text{ and }\\
\label{lambda2}& D(\dot{\Theta}_{M}(0)) < \lambda K
\end{align}
achieves the complete frequency synchronization asymptotically, i.e., 
\begin{equation}
\lim_{t\to\infty} \dot\theta_i(t) = \omega \ \ \text{ for all } 1\le i \le N.
\end{equation}
\end{theorem}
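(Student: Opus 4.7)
The plan is a continuity bootstrap carried on the leading cluster $\Theta_M$, followed by an extension of the conclusion to the $N-M$ stragglers via the mean-dynamics of \eqref{kuramoto2nd}. Two quantities are run simultaneously: the phase diameter $D(\Theta_M(t))$ and the velocity diameter $D(\dot\Theta_M(t))$. Define
$$T^*:=\sup\bigl\{t\ge 0 : D(\Theta_M(s))<\pi-\alpha \text{ and } D(\dot\Theta_M(s))<\lambda K \text{ for every } s\in[0,t]\bigr\};$$
conditions \eqref{alpha2}--\eqref{lambda2} together with the continuity noted in Remark~\ref{rm2} give $T^*>0$, and the main task is to show $T^*=\infty$.

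For the velocity a priori bound, pick indices $i_*,j_*\le M$ realizing $D(\dot\Theta_M)$ and subtract the corresponding copies of \eqref{kuramoto2nd}; the right-hand side is bounded in absolute value by $D(\Omega)+2K\le(\mu+2)K$. Gronwall applied to the upper Dini derivative of $D(\dot\Theta_M)$ (to accommodate changes of extremal pair in the spirit of Remark~\ref{rm2}) yields
$$D(\dot\Theta_M(t))\le D(\dot\Theta_M(0))e^{-t/m}+(\mu+2)K(1-e^{-t/m})<\lambda K \quad\text{on } [0,T^*],$$
since $\lambda>\mu+2$. The phase structure rests on the trigonometric restoring inequality
$$\frac{K}{N}\sum_{k=1}^N\bigl[\sin(\theta_k-\theta_{i_*})-\sin(\theta_k-\theta_{j_*})\bigr]\le -2\mu K \quad\text{whenever } D(\Theta_M)\in[\pi-\alpha-\beta,\pi-\alpha],$$
obtained by splitting the $k$-sum into $k\le M$ (where sum-to-product together with $D(\Theta_M)<\pi$ produces a negative contribution quantified by the first term of \eqref{mu2}) and $k>M$ (controlled crudely by the second term), then invoking hypothesis \eqref{mu2}. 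With \eqref{domega2}, the forcing on $\dot\theta_{i_*}-\dot\theta_{j_*}$ is thus $\le -\mu K$ on this bad window.

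To close the bootstrap, suppose for contradiction $T^*<\infty$; the strict inequality in the velocity bound above forces $D(\Theta_M(T^*))=\pi-\alpha$. Let $t_0$ be the last time before $T^*$ with $D(\Theta_M(t_0))=\pi-\alpha-\beta$; the restoring estimate applies throughout $[t_0,T^*]$. Duhamel applied to the damped equation for $\dot\theta_{i_*}-\dot\theta_{j_*}$ shows this difference drops below $0$ within a braking time $\tau\le m\ln((\lambda+2\mu+2)/\mu)$; the integrated phase drift during $[t_0,t_0+\tau]$ is at most $(\lambda+\mu+2)K\tau$, which by \eqref{mk2} is bounded by $\beta/4$. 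A symmetric estimate after the velocity gap reverses confines $D(\Theta_M)$ below $\pi-\alpha-\beta/2$ at $T^*$, a contradiction. Hence $T^*=\infty$; the restoring estimate is then active whenever $D(\Theta_M)\ge\pi-\alpha-\beta$, and $D(\Theta_M)$ reaches and stays near the good zone in finite time. A final Duhamel argument, now with vanishing effective forcing, gives $D(\dot\Theta_M(t))\to 0$ and hence a common limit $\omega^*$ for the $M$-cluster frequencies. Summing \eqref{kuramoto2nd} over $i=1,\dots,N$ yields $m\overline{\ddot\theta}+\overline{\dot\theta}=\omega$, so $\overline{\dot\theta}(t)\to\omega$; the damped equations for $\dot\theta_k$ with $k>M$, driven asymptotically by the synchronized cluster, then force $\dot\theta_k\to\omega^*$, and averaging identifies $\omega^*=\omega$.

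The delicate point is the bootstrap closure. Inertia allows $D(\Theta_M)$ to coast upward even while the net force is restoring, and one must quantify precisely the time required for the velocity gap to reverse. The logarithm in \eqref{mk2} is exactly the relaxation time for the damped velocity equation to dissipate an excess of order $(\lambda+\mu+2)K$ against a restoring bias of order $\mu K$; the smallness of $mK$ there ensures that the phase drift during this relaxation cannot exhaust the buffer $\beta$. Everything upstream is elementary Duhamel or careful trigonometry, while the extension from the $M$-cluster to the $N-M$ stragglers, though conceptually straightforward once the cluster is phase-locked, still requires verifying that the weak interaction from the locked cluster suffices to pin each outlier frequency at the common mean $\omega$.
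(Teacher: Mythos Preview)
Your bootstrap for the leading cluster is essentially the paper's sector-trapping lemma (Lemma~\ref{lem3}) and is correct in spirit: once the $M$-cluster is confined to a sector of opening $\pi-\alpha$, the restoring inequality combined with the relaxation-time condition \eqref{mk2} keeps it there. The paper carries this out via the integral (Duhamel) representation rather than a pure differential inequality, which is why the constants in \eqref{mu2} carry the extra $\beta/8,\beta/4$ drift corrections, but the mechanism is the same.

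The genuine gap is everything after that. Your claim that ``a final Duhamel argument, now with vanishing effective forcing, gives $D(\dot\Theta_M(t))\to 0$'' is false in the nonidentical case. The forcing on $\dot\theta_i-\dot\theta_j$ contains the constant $\omega_i-\omega_j\ne 0$ and the straggler contributions $\tfrac{K}{N}\sum_{k>M}[\sin(\theta_k-\theta_i)-\sin(\theta_k-\theta_j)]$, which are of size $O(K)$ and do not vanish just because $D(\Theta_M)$ sits in the good zone. Duhamel therefore only yields the uniform bound $D(\dot\Theta_M)\lesssim(\mu+2)K$, not decay to zero, so the cluster is not shown to frequency-lock. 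Your subsequent straggler argument then rests on this unproven lock and is in any case too quick: even granting a locked cluster at $\omega^*$, the forcing $\tfrac{K}{N}\sum_{j\le M}\sin(\theta_j-\theta_k)$ on a straggler oscillates with a priori unknown average, and nothing prevents $\theta_k$ from drifting indefinitely relative to the cluster.

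The paper closes both gaps differently. First (Lemma~\ref{lemma8}), it shows $\sup_{t\ge0}D(\Theta(t))<\infty$ for the \emph{full} ensemble: if some straggler $\theta_k$ had unbounded drift relative to the cluster, by continuity it would at some late time $t_*$ land in the cluster's sector with $D(\dot\Theta(t_*))<\lambda K$ (the latter from Lemma~\ref{lemma1}), and then the sector-trapping lemma applied with $M+1$ in place of $M$ traps it permanently---a contradiction. Second (Lemma~\ref{lemdiameter}), boundedness of $D(\Theta)$ feeds into the energy identity for $E_N(t)=\sum_i|\dot\theta_i|^2$: the cross-term $\int_0^t H_N$ is a bounded total derivative, and under the mean-zero normalization $\int_0^t\Lambda_N$ is bounded because each $\theta_i$ is bounded. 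Hence $\int_0^\infty E_N<\infty$, and uniform continuity of $E_N$ gives $\dot\theta_i\to 0=\omega$. This energy route is what actually produces the frequency synchronization; there is no direct Duhamel path to $D(\dot\Theta)\to 0$ in the nonidentical setting.
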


For the identical case, i.e., $\omega_i=\omega$ for all $i=1,2,\cdots,N$, we have the \emph{phase synchronization} result as follows.
\begin{theorem}
\label{thm3}
Let  $0<\beta < \alpha < \pi$ such that 
$$2\beta + \alpha < \pi$$ and
\begin{equation}
\label{pmu2}
\mu = \sin(\frac{\alpha}{2} - \frac{\beta}{4})\cos(\frac{\alpha}{2}+\frac{5}{8}\beta).
\end{equation}
Let $\lambda > \mu +2$ be any fixed positive number. 
 Assume that 
\begin{align}
\label{pmk2}& mK \le \beta \Big/ \Big(4(\lambda + \mu +2) \ln (\frac{ \lambda+ 2\mu +2 } { \mu } )\Big)\,\, \text{ and }\\
\label{pdomega2}& D(\Omega) =0.
\end{align}
Then, the solution of \eqref{kuramoto2nd} with the initial conditions 
\begin{align}
\label{palpha2}&D(\Theta(0)) \le \pi -  \alpha - \beta  \text{ and }\\
\label{plambda2}& D(\dot{\Theta}(0)) < \lambda K
\end{align}
achieves the complete phase synchronization asymptotically, i.e., 
$$\lim_{t\to\infty} \left( \theta_i(t) - \theta_j(t) \right)  =0 \ \ \text{ for all } 1\le i,j\le N.$$
\end{theorem}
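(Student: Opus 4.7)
My plan is to bootstrap Theorem~\ref{thm3} from Theorem~\ref{thm2} and then upgrade the conclusion from frequency to phase synchronization via a damped-pendulum inequality for the diameter. Observe first that Theorem~\ref{thm3} is the $M=N$, $D(\Omega)=0$ specialization of Theorem~\ref{thm2}: with $M=N$, the upper bound in \eqref{mu2} collapses to $\sin(\tfrac{\alpha}{2}-\tfrac{\beta}{4})\cos(\tfrac{\alpha}{2}+\tfrac{5}{8}\beta)$, which is precisely the value fixed by \eqref{pmu2}, while \eqref{domega2} is trivial. Invoking Theorem~\ref{thm2} therefore yields the complete frequency synchronization $\dot\theta_i(t)\to\omega$, together with the a priori bounds $\sup_{t\ge 0}D(\Theta(t))<\pi-\alpha$ and $\sup_{t\ge 0}D(\dot\Theta(t))<\infty$ that its proof produces for the trajectory starting from \eqref{palpha2}--\eqref{plambda2}.

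Next I would derive a dissipative second-order inequality for $D(\Theta(t))$. Subtracting the $j$-th from the $i$-th equation of \eqref{kuramoto2nd} and applying a product-to-sum identity,
\begin{equation*}
m(\ddot\theta_i-\ddot\theta_j)+(\dot\theta_i-\dot\theta_j) = -\frac{2K}{N}\sin\!\Big(\frac{\theta_i-\theta_j}{2}\Big)\sum_{k=1}^{N}\cos\!\Big(\theta_k-\frac{\theta_i+\theta_j}{2}\Big).
\end{equation*}
When $(i,j)$ realises the diameter, each argument of the cosine sum has absolute value at most $D(\Theta(t))/2<(\pi-\alpha)/2<\pi/2$, so the sum is bounded below by $N\cos(D(\Theta(t))/2)$. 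Using $\sin(D/2)\cos(D/2)=\tfrac12\sin D$ and handling collisions via Remark~\ref{rm2} (apply the differenced equation to every representation of the diameter and take the worst one-sided bound), I arrive at
\begin{equation*}
m\,\ddot D(\Theta(t))+\dot D(\Theta(t))\le -K\sin\!\big(D(\Theta(t))\big)
\end{equation*}
in the appropriate one-sided sense.

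Integrating this inequality over $[0,T]$, the left-hand side is uniformly bounded in $T$ by the a priori control from Step~1, so $\int_0^\infty\sin(D(\Theta(t)))\,dt<\infty$. Because $D(\Theta(\cdot))$ is Lipschitz (its one-sided slopes are dominated by $D(\dot\Theta)$, which is bounded), the integrand $\sin(D(\Theta(\cdot)))$ is uniformly continuous, and Barbalat's lemma forces $\sin(D(\Theta(t)))\to 0$. Since $D(\Theta(t))\in[0,\pi-\alpha]\subset[0,\pi)$ and $\sin$ is strictly positive on $(0,\pi)$, this yields $D(\Theta(t))\to 0$, hence $\theta_i(t)-\theta_j(t)\to 0$ for every pair $(i,j)$.

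The hard part will be the rigorous justification of the second-order inequality for $D(\Theta(t))$ at collision times, where $D$ is generally not $C^1$, let alone $C^2$. The resolution is the one encoded in Remark~\ref{rm2}: one applies the differenced equation to each representation $\theta_{i_k}-\theta_{j_k}$ of the diameter and takes a worst-case one-sided bound, which legitimises the displayed inequality as a one-sided differential comparison and is enough to make the integral/Barbalat argument rigorous.
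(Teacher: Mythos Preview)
Your Step~1 reduction to Theorem~\ref{thm2} with $M=N$ is correct and coincides with the paper's first move. The divergence is in Steps~2--3, where you derive and then \emph{integrate} a second-order differential inequality for $D(\Theta(t))$. The paper explicitly warns against this (see the paragraph following \eqref{ddd}: ``it is not appropriate to consider the second order differential inequality for $D(\Theta(t))$ and to apply the Gronwall type inequalities''), and the obstruction is not removed by Remark~\ref{rm2}.

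The pointwise inequality $m\ddot D+\dot D\le -K\sin D$ is fine on each open interval between collision times, where a single pair realises the diameter. Integrating over those intervals and summing, the $D$-terms telescope (since $D$ is continuous), but the $\dot D$-terms do not: one picks up
\[
K\int_0^T\!\sin D\,dt \;\le\; -m\big(\dot D(T^-)-\dot D(0^+)\big)-(D(T)-D(0))\;+\;m\sum_n\big(\dot D(t_n^+)-\dot D(t_n^-)\big).
\]
For the maximum of finitely many smooth functions, the right derivative at a corner is the largest branch slope and the left derivative the smallest, so every jump $\dot D(t_n^+)-\dot D(t_n^-)$ is \emph{nonnegative}. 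Remark~\ref{rm2} gives only first-order (monotonicity) information and says nothing about $\sum_n(\dot D(t_n^+)-\dot D(t_n^-))$; neither the a~priori bounds from Step~1 nor the frequency synchronisation $\dot\Theta\to 0$ rule out infinitely many collisions with a divergent jump sum. Hence the bound $\int_0^\infty\sin D<\infty$ is unjustified, and the Barbalat step does not go through.

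The paper avoids $D$ altogether at this stage. After Theorem~\ref{thm2} gives $\dot\theta_i\to\omega$, it differentiates \eqref{kuramoto2nd} and uses Lemmas~\ref{lemmathetai}--\ref{lemmathetai2} to bound the third derivatives, so Lemma~\ref{cvl2} yields $\ddot\theta_i\to 0$. Passing to the limit in \eqref{kuramoto2nd} with $D(\Omega)=0$ gives $\sum_k\sin(\theta_k-\theta_i)\to 0$ for every $i$; combined with the trapping $D(\Theta(t))\le\pi-\alpha-\beta<\pi$ from Lemma~\ref{lem3}, this forces $\theta_i-\theta_j\to 0$. That route manipulates only the smooth functions $\theta_i$, never a second-order inequality for the non-$C^1$ diameter, which is precisely the step that fails in your argument as written.
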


The diameter function plays an important role  in our synchronization analysis.
First, it is obvious that $\lim_{t \to \infty} D(\Theta(t))=0$ implies the phase synchronization.
On the other hand, as we shall see in Lemma~\ref{lemdiameter} in Section 2, the uniform boundedness of $D(\Theta(t))$, i.e.,
\begin{equation}
\label{diameter111}
\sup_{t \ge 0} D(\Theta(t)) < \infty,
\end{equation}
implies the  complete frequency synchronization. 
As mentioned in Remark~\ref{rm2}, the diameter function $D(\Theta(t))$ for \eqref{kuramoto2nd}, in general, may not be $C^2$ nor $C^1$ for all $t \ge 0$. Hence, it is not appropriate to consider the second order differential inequality for $D(\Theta(t))$ and to apply the Grownwall type inequalities  to carry out the asymptotic analysis for the system \eqref{kuramoto2nd}.  On the other hand, to study the rate of change of $D(\Theta(t))$, one should take the onsets of collisions into consideration carefully. Due to the onsets of collision, to estimate the magnitude of the diameter function for all $t\ge0$, one requires more information than just certain constraint on $D(\Theta(0))$ and $\dot{D}(\Theta(0)).$ Here $\dot{D}(\Theta(t))$ is defined as
\begin{equation}
\label{ddd}
\dot{D}(\Theta(t)) = \dot{\theta}_i(t)-\dot{\theta}_j(t),
\end{equation}
 where $\theta_i(t)-\theta_j(t)$ is a representation of $D(\Theta(t))$ that maximizes the right hand side of  \eqref{ddd}.  
In literatures, there are some invalid arguments derived from the aforementioned inappropriate scenario. 
In general, 
it is impossible to get the estimate of $D(\Theta(t))$ of the system \eqref{kuramoto2nd} solely by the initial data: $D(\Theta(0))$ and $\dot{D}(\Theta(0))$. The problem arises when the collisions occur. 
On the occasion of collisions,  $\dot{D}(\Theta(t))$ may get suddenly large  and drive  $D(\Theta(t))$ much larger than it was.  
An easy example is given as follows:
Let $N=4$,  $\omega_i=0$ for $i=1,2,3,4,$  and set
\begin{align}
\label{initial}& \theta_1(0)=0, &&\theta_2(0)=\epsilon_1, &&&\theta_3(0)=\epsilon_2, &&&&\theta_4(0) = \epsilon_3\\
\label{initialvelocity}&\dot{\theta}_1(0)=0, &&\dot{\theta}_2(0)=-a, &&&\dot{\theta}_3(0)=a, &&&&\dot{\theta}_4(0)=0,
\end{align}
where  $0<\epsilon_1<\epsilon_2<\epsilon_3<\frac{\pi}{6}$ and $a>0$.
One can readily check that 
\begin{align*}
& D(\Theta(0))=\theta_4(0)-\theta_1(0)=\epsilon_3,\\
 &\dot{D}(\Theta(0))=\dot{\theta}_4(0)-\dot{\theta}_1(0)=0
  \end{align*}
  and initially (for $t>0$ small), $D(\Theta(t))$ satisfies the differential inequality 
$$m\ddot{D}(\Theta(t))+\dot{D}(\Theta(t)) \le 0.$$
However,  it is impossible to conclude that $D(\Theta(t))$ is bounded by some prescribed number depending only on $D(\Theta(0))$ and $\dot{D}(\Theta(0))$. In fact,
similarly as in the proof of  Lemma~\ref{lemma1}, we can derive that
\begin{equation*}
\dot{\theta}_3(t) - \dot{\theta}_2(t) \ge 2a e^{-\frac{1}{m}t} -2K,
\end{equation*}
and 
\begin{equation*}
\theta_3(t) - \theta_2(t) \ge  \epsilon_2 - \epsilon_1 + 2a (1- \frac{1}{m} e^{-\frac{1}{m}t}) - 2Kt.
\end{equation*}
Choose $t=T>0$ such that $\frac{1}{m}e^{-\frac{1}{m}T} < \frac{1}{2} $. We then see
\begin{equation*}
D(\Theta(T))\ge\theta_3(T) - \theta_2(T) \ge  \epsilon_2 - \epsilon_1 + a  - 2KT.
\end{equation*}
 The right hand side of the above inequality can be arbitrarily large by choosing $a>0$ large. 

 
 In this article, we carry out the asymptotic analysis for $D(\Theta(t))$ in a   comprehensive way. To estimate the rate of the change of $D(\Theta(t))$, we treat \eqref{kuramoto2nd} as  first order differential equations of $\dot{\Theta}(t)$, and take advantage of the boundedness of the nonlinear terms to obtain the following estimate.
\begin{lemma}
\label{lemma1}
For any $i,j \in \{1,2,3,\cdots,N \}$, the solution $\Theta(t)$ of \eqref{kuramoto2nd} satisfies
\begin{equation}
\label{difference}
| \dot{\theta}_i(t) -  \dot{\theta}_j(t) | \le | \dot{\theta}_i(0) - \dot{\theta}_j(0)|  e^{-\frac{1}{m}
t}+ (| \omega_i - \omega_j |+2K)(1 - e^{-\frac{1}{m} t})  \\ 
\end{equation}
for all $t \ge 0.$
\end{lemma}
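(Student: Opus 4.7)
The plan is to treat \eqref{kuramoto2nd} as a first-order linear ODE in $\dot\theta_i$ and exploit the uniform boundedness of the sinusoidal coupling term. Fix $i,j$, set $v(t) := \dot\theta_i(t) - \dot\theta_j(t)$, and subtract the equation for index $j$ from that for index $i$. This yields
\begin{equation*}
m\dot v(t) + v(t) = (\omega_i - \omega_j) + \frac{K}{N}\sum_{k=1}^{N}\bigl[\sin(\theta_k(t) - \theta_i(t)) - \sin(\theta_k(t) - \theta_j(t))\bigr].
\end{equation*}
Denote the right-hand side by $R(t)$. Since $|\sin x - \sin y| \le 2$ for every pair of real numbers, the sum is bounded in absolute value by $2N$, so
\begin{equation*}
|R(t)| \le |\omega_i - \omega_j| + 2K \qquad \text{for all } t \ge 0.
\end{equation*}

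Next I would solve the linear ODE $m\dot v + v = R(t)$ by the integrating factor $e^{t/m}$. Writing $\frac{d}{dt}(e^{t/m} v(t)) = \frac{1}{m} e^{t/m} R(t)$ and integrating from $0$ to $t$ produces
\begin{equation*}
v(t) = v(0)\, e^{-t/m} + \frac{1}{m}\int_0^t e^{(s-t)/m}\, R(s)\, ds.
\end{equation*}
Taking absolute values and using the uniform bound on $|R|$ gives
\begin{equation*}
|v(t)| \le |v(0)|\, e^{-t/m} + (|\omega_i - \omega_j| + 2K) \cdot \frac{1}{m}\int_0^t e^{(s-t)/m}\, ds,
\end{equation*}
and the remaining integral evaluates to $1 - e^{-t/m}$, which is exactly the inequality \eqref{difference}.

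There is really no major obstacle here: the proof is essentially an application of Duhamel/Gronwall to a scalar linear ODE. The only conceptual point worth emphasizing, as the authors stress in the paragraph preceding the lemma, is that one should \emph{not} differentiate the coupled second-order system for $D(\Theta)$ directly (because of collisions), but instead work pairwise with the velocity differences, where each pair satisfies a smooth first-order equation with a bounded forcing term. The $2K$ bound on the coupling is the essential quantitative input; analyticity of $\Theta$ on $[0,\infty)$ (noted in Remark~\ref{rm2}) justifies the differentiation and integration above with no regularity gymnastics.
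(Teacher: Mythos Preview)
Your proof is correct and follows essentially the same approach as the paper: the paper also derives the Duhamel formula for $\dot\theta_i-\dot\theta_j$ (their equation \eqref{difference0}) and then bounds the sinusoidal term by $2K(1-e^{-t/m})$ using $|\sin x-\sin y|\le 2$. The only cosmetic difference is that the paper integrates the constant term $\omega_i-\omega_j$ separately before taking absolute values, whereas you lump it into $R(t)$; the resulting inequality is identical.
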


Based on Lemma~\ref{lemma1}, we are able to get some nonlocal estimates to establish the  following sector trapping lemma, which plays a key role in the proof of our main theorems.

\begin{lemma}
\label{lem3}

Let $M>\frac{N}{2}$ be a positive integer and   $0<\beta<\alpha$ such that $$2\beta+ \alpha < \pi$$ and 
$$ \frac{M}{N}\sin(\frac{\alpha}{2} - \frac{\beta}{4}) \cos(\frac{\alpha}{2}+\frac{5}{8}\beta)-(1-\frac{M}{N})\cos(\frac{\alpha}{2} - \frac{\beta}{8})>0.$$
Let $\mu$ and $\lambda$ be the numbers satisfying
\begin{equation}
\label{mu3}
0< \mu \le \frac{M}{N}\sin(\frac{\alpha}{2} - \frac{\beta}{4}) \cos(\frac{\alpha}{2}+\frac{5}{8}\beta)-(1-\frac{M}{N})\cos(\frac{\alpha}{2} - \frac{\beta}{8})
\end{equation}
and $\lambda > \mu +2$. 
Assume that \eqref{mk2}-\eqref{domega2} hold and  $\Theta(t)$ is a solution of \eqref{kuramoto2nd} that satisfies \eqref{alpha2} - \eqref{lambda2}. Then, 
$$
\sup_{t \ge 0} D(\Theta_M(t)) \le \pi - \alpha,
$$
and 
\begin{equation}
\label{alpha111}
D(\Theta_M(t))\le  \pi- \alpha-\beta
\end{equation}
for all 
$$
t > \tau + \frac{\beta}{K(\mu-(\lambda + 2 \mu +2)e^{-\frac{1}{m} \tau})},
$$
where $\tau$ is given by
\begin{equation}
\label{tau2f}
\tau := \frac{\beta}{ 4(\lambda + \mu +2) K}.
\end{equation}

\end{lemma}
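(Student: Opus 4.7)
The plan is to combine the uniform velocity bound from Lemma~\ref{lemma1} with a barrier/continuation argument at $D(\Theta_M)=\pi-\alpha$, and then to obtain the quantitative decay by integrating the equation for $\dot{\theta}_{i^*}-\dot{\theta}_{j^*}$ using the integrating factor $e^{t/m}$, exactly as in the proof of Lemma~\ref{lemma1}. Applying Lemma~\ref{lemma1} to any pair $i,j\le M$ and combining with \eqref{lambda2} and \eqref{domega2} gives for every $t\ge 0$
\[
|\dot{\theta}_i(t)-\dot{\theta}_j(t)| \le \lambda K e^{-t/m}+(\mu+2)K(1-e^{-t/m}) \le (\lambda+\mu+2)K,
\]
so $D(\Theta_M)$ is globally Lipschitz with constant at most $(\lambda+\mu+2)K$. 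Assumption \eqref{mk2} is equivalent to $\tau/m\ge \ln((\lambda+2\mu+2)/\mu)$, i.e.\ $e^{-\tau/m}\le \mu/(\lambda+2\mu+2)$, which is what makes the quantity $\mu-(\lambda+2\mu+2)e^{-\tau/m}$ appearing in the stated time strictly positive.

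Integrating the Lipschitz bound on $[0,\tau]$ yields $D(\Theta_M(\tau))\le(\pi-\alpha-\beta)+(\lambda+\mu+2)K\tau = \pi-\alpha-3\beta/4$, so in particular $D(\Theta_M)$ stays below $\pi-\alpha$ up to time $\tau$. For the global bound I would set $T^*:=\inf\{t\ge \tau : D(\Theta_M(t))>\pi-\alpha\}$ and assume for contradiction $T^*<\infty$. At such a $T^*$, by continuity $D(\Theta_M(T^*))=\pi-\alpha$; pick extremal indices $i^*,j^*\le M$ with $\theta_{i^*}(T^*)-\theta_{j^*}(T^*)=\pi-\alpha$ and use the sum-to-product identity
\[
\sin(\theta_k-\theta_{i^*})-\sin(\theta_k-\theta_{j^*}) = -2\sin\!\Big(\tfrac{\theta_{i^*}-\theta_{j^*}}{2}\Big)\cos\!\Big(\theta_k-\tfrac{\theta_{i^*}+\theta_{j^*}}{2}\Big).
\]
The sum over $k\le M$ is bounded using the fact that $\theta_k-(\theta_{i^*}+\theta_{j^*})/2$ lies in $[-(\theta_{i^*}-\theta_{j^*})/2,(\theta_{i^*}-\theta_{j^*})/2]$, so the cosine factor is bounded below by $\cos((\theta_{i^*}-\theta_{j^*})/2)>0$; the $k>M$ part is handled with the crude bound $|\cos|\le 1$. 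Combined with \eqref{domega2} and the sign hypothesis \eqref{mu3}, the right-hand side of
\[
m(\ddot{\theta}_{i^*}-\ddot{\theta}_{j^*})+(\dot{\theta}_{i^*}-\dot{\theta}_{j^*}) = (\omega_{i^*}-\omega_{j^*})+\tfrac{K}{N}\sum_k[\sin(\theta_k-\theta_{i^*})-\sin(\theta_k-\theta_{j^*})]
\]
is strictly negative and bounded away from zero on a neighbourhood of $T^*$. Integrating the first-order form of this equation via the factor $e^{t/m}$, as in the proof of Lemma~\ref{lemma1}, produces $\dot{\theta}_{i^*}(t)-\dot{\theta}_{j^*}(t)<0$ in a left-neighbourhood of $T^*$, which through the one-sided analysis of Remark~\ref{rm2} makes $D(\Theta_M)$ strictly decreasing near $T^*$, contradicting its definition. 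This establishes $\sup_{t\ge 0}D(\Theta_M(t))\le \pi-\alpha$.

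For the quantitative decay to $\pi-\alpha-\beta$, I would rerun the same coupling estimate on the now-invariant window $D(\Theta_M)\in[\pi-\alpha-\beta,\pi-\alpha]$, extracting the improved bound $\dot{\theta}_{i^*}(t)-\dot{\theta}_{j^*}(t)\le -K(\mu-(\lambda+2\mu+2)e^{-t/m})$ via the same integrating-factor representation, and then integrating once more over a window of length $\beta/(K(\mu-(\lambda+2\mu+2)e^{-\tau/m}))$ to reduce $D$ by at least $\beta$. The main obstacle, in my view, is the non-smoothness of $D(\Theta_M)$ at collisions: there may be no single pair whose pairwise difference realizes the diameter on any open interval, so a direct second-order ODE comparison for $D$ is not available. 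I would handle this precisely as Remark~\ref{rm2} suggests, by estimating the one-sided derivative of each of the finitely many pairwise representations of $D(\Theta_M)$ at each instant and concluding monotonicity from strict negativity in all of them simultaneously. The buffer constants $\beta/4,5\beta/8,\beta/8$ in the sign hypothesis \eqref{mu3} are, I expect, exactly the slack needed to absorb the short-time velocity mismatch, the inertial overshoot past any threshold, and the crude $|\cos|\le 1$ bound used for the $k>M$ terms.
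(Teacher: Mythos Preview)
Your overall architecture---uniform Lipschitz bound from Lemma~\ref{lemma1}, a barrier at $\pi-\alpha$, the integrating-factor representation \eqref{difference0}, and collision handling via Remark~\ref{rm2}---matches the paper's. But the barrier step, as written, has a genuine gap. Knowing that the right-hand side of
\[
m(\ddot\theta_{i^*}-\ddot\theta_{j^*}) + (\dot\theta_{i^*}-\dot\theta_{j^*}) = (\omega_{i^*}-\omega_{j^*}) + K F_N(t)
\]
is strictly negative \emph{in a neighbourhood of} $T^*$ does not by itself yield $\dot\theta_{i^*}(T^*)-\dot\theta_{j^*}(T^*)<0$. The integrating-factor formula expresses $\dot\theta_{i^*}(t)-\dot\theta_{j^*}(t)$ as an exponentially weighted average of $F_N$ over the \emph{entire} interval $[0,t]$, plus transients; on most of $[0,T^*]$ you only know $|F_N|\le 2$, and the inertial memory can carry a positive velocity through a short interval of negative forcing. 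So ``integrating \ldots\ produces $\dot\theta_{i^*}-\dot\theta_{j^*}<0$'' is not justified from a local-in-time sign.

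What is missing is a look-back of the specific length $\tau$, and this is exactly where the buffer constants you mention come from. Because the Lipschitz constant for pairwise differences is $(\lambda+\mu+2)K$ and $\tau(\lambda+\mu+2)K=\beta/4$, whenever $t\ge\tau$ and $D(\Theta_M(t))\in[\pi-\alpha-\beta,\pi-\alpha]$ with extremal pair $(i^*,j^*)$, one has on the whole window $s\in[t-\tau,t]$ the bounds $|\,(\theta_{i^*}-\theta_{j^*})(s)-(\theta_{i^*}-\theta_{j^*})(t)\,|\le\beta/4$ and $|\theta_k(s)-\tfrac12(\theta_{i^*}+\theta_{j^*})(s)|\le \tfrac12(\theta_{i^*}-\theta_{j^*})(t)+\beta/4$ for $k\le M$. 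These produce $\sin\tfrac{\theta_{i^*}-\theta_{j^*}}{2}\ge\cos(\tfrac\alpha2+\tfrac{5\beta}{8})$, $\sin\tfrac{\theta_{i^*}-\theta_{j^*}}{2}\le\cos(\tfrac\alpha2-\tfrac\beta8)$, and $\cos(\theta_k-\tfrac{\theta_{i^*}+\theta_{j^*}}{2})\ge\sin(\tfrac\alpha2-\tfrac\beta4)$, hence $F_N(s)\le -2\mu$ for all $s\in[t-\tau,t]$. Now split the integral at $t-\tau$: the far piece contributes at most $2Ke^{-\tau/m}$, the near piece at most $-2\mu K(1-e^{-\tau/m})$, and combined with the transients $\lambda K e^{-t/m}$ and $\mu K$ you obtain the uniform estimate
\[
\dot\theta_{i^*}(t)-\dot\theta_{j^*}(t)\ \le\ K\bigl(-\mu+(\lambda+2\mu+2)e^{-\tau/m}\bigr)\ <\ 0,
\]
valid for every representation of $D(\Theta_M(t))$. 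This single inequality, together with the observation in Remark~\ref{rm2}, gives both the invariance of $\{D(\Theta_M)\le\pi-\alpha\}$ and the quantitative return time to $\pi-\alpha-\beta$; your decay paragraph already has the right target, but it should be fed by this look-back estimate rather than by an instantaneous sign. The exponent in your stated bound should accordingly be $e^{-\tau/m}$ (uniform in $t\ge\tau$), not $e^{-t/m}$.
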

Our strategy to prove Theorem~\ref{thm2}  is to show that under the assumptions of Theorem~\ref{thm2}, by Lemma~\ref{lem3}, the inequality 
\eqref{alpha111} holds. 
We can then apply Lemma~\ref{lemma8} to obtain \eqref{diameter111}. 
Finally, by applying Lemma~\ref{lemdiameter}, we obtain the conclusion of  Theorem~\ref{thm2}.  
The proof of Theorem~\ref{thm3} is based on Theorem~\ref{thm2} and Lemma~\ref{lem3}.

To state the next theorem, we define 
 \begin{equation}
\begin{split}
\label{beta3}
\lambda_3 & : = \max_{ \beta \in(0, \frac{2}{5}\pi] } \left( \frac{9}{10}\sin \frac{\beta}{16} \cos \frac{11 \beta}{16} \right)  \\
& = \frac{9}{10}\sin \frac{\alpha_3}{16} \cos \frac{11 \alpha_3}{16}
\end{split}
\end{equation}
where $\alpha_3 = 2\pi / 5$.
\begin{theorem}
\label{theorem3}
 For $N=3$, suppose 
\begin{align}
\label{mk3}& mK \le \alpha_3 \Big/ \Big(4( \sin \frac{\alpha_3}{16} \cos \frac{11 \alpha_3}{16} +2) \ln (\frac{4\sin \frac{\alpha_3}{16} \cos \frac{11 \alpha_3}{16}+6} { \sin \frac{\alpha_3}{16} \cos \frac{11\alpha_3}{16} } )\Big),\\
\label{domega3}& D(\Omega) < \lambda_3K.
\end{align}
 Then for any initial phase $\Theta(0)$ and velocity $\dot\Theta(0)$, the initial value problem for \eqref{kuramoto1} 
 admits a unique solution $\Theta\in C^\infty([0,\infty))$ satisfying 
\begin{equation}
\lim_{t\to\infty} \dot\theta_i(t) = \omega \ \ \text{ for all } 1\le i \le N.
\end{equation}
I.e., system \eqref{kuramoto1} exhibits the unconditional (independent of initial conditions) complete frequency  synchronization.
\end{theorem}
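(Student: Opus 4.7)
The plan is to bootstrap Theorem~\ref{thm2} applied with $N=3$ and $M=2$ (the only admissible choice under $M>N/2$). Theorem~\ref{thm2} requires the initial-data restrictions \eqref{alpha2}-\eqref{lambda2}, which are absent in Theorem~\ref{theorem3}; the central task is therefore to show that, for any initial data $(\Theta(0),\dot{\Theta}(0))$, after a finite transient time $T$ some pair of oscillators satisfies both restrictions at $t=T$ for appropriately chosen parameters $\alpha=\alpha_3$, $\beta$, $\mu$, $\lambda$.

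First I would tame the velocities via Lemma~\ref{lemma1}. The estimate \eqref{difference} gives
\begin{equation*}
|\dot{\theta}_i(t)-\dot{\theta}_j(t)| \le D(\dot{\Theta}(0))\,e^{-t/m} + (D(\Omega)+2K)(1-e^{-t/m}),
\end{equation*}
so by \eqref{domega3} one has $\limsup_{t\to\infty}D(\dot{\Theta}(t)) \le D(\Omega)+2K < (\lambda_3+2)K$. Fixing $\lambda$ slightly larger than $\lambda_3+2$, there exists an explicit $T_1$, controlled by $D(\dot{\Theta}(0))$, beyond which $D(\dot{\Theta}(t))<\lambda K$. This secures \eqref{lambda2} for all $t\ge T_1$, and in particular at whatever later time Step~2 produces.

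Next, I would show that some pair of phases lies within $\pi-\alpha_3-\beta$ at some time $T_2\ge T_1$. Pigeonhole on the circle guarantees that at every instant two of the three phases lie within arc-distance $2\pi/3$, but since $\pi-\alpha_3-\beta<2\pi/3$ for the relevant $\beta$, pigeonhole alone is insufficient. The remedy is a dynamical argument exploiting that (i) the equidistributed three-oscillator configuration is unstable under the gradient part of \eqref{kuramoto2nd}, and (ii) the heterogeneity $D(\Omega)<\lambda_3 K$ is small. Concretely, I would track the minimum pairwise circular distance $d_{\min}(t)$ and show, via the same type of differential-inequality arguments that underlie Lemma~\ref{lem3} (together with the collision-handling scheme of Remark~\ref{rm2}), that $d_{\min}$ drops below $\pi-\alpha_3-\beta$ within a time bounded in terms of the initial data. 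This is the heart of the proof and the step I expect to be the main obstacle.

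Once $T_2$ and the close pair are identified, I would relabel so that $\{1,2\}$ is this pair and apply Theorem~\ref{thm2} to the Cauchy problem restarted at $T_2$. Hypotheses \eqref{mk3}-\eqref{domega3} translate into \eqref{mk2}-\eqref{domega2} for the chosen parameters; in particular, the prefactor $9/10$ in the definition of $\lambda_3$ provides the strict slack $D(\Omega)<\lambda_3 K<\mu K$ needed for \eqref{domega2}. Theorem~\ref{thm2} then yields $\lim_{t\to\infty}\dot{\theta}_i(t)=\omega$ for every $i$. The specific choice $\alpha_3=2\pi/5$ and the explicit form of $\lambda_3$ should be read as the outcome of optimizing the quantitative thresholds arising in Step~2.
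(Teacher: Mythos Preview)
Your plan diverges from the paper's proof, and Step~2 contains a genuine gap that cannot be repaired along the lines you sketch.

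\textbf{The gap.} Your Step~2 claims that for every initial data the minimum pairwise circular distance $d_{\min}(t)$ eventually falls below $\pi-\alpha_3-\beta<3\pi/5$. This is false. Take $D(\Omega)=0$ (identical oscillators, which satisfies \eqref{domega3}) and initial data $\Theta(0)=(0,2\pi/3,4\pi/3)$, $\dot\Theta(0)=0$. This is an equilibrium of \eqref{kuramoto1}, so $d_{\min}(t)\equiv 2\pi/3>3\pi/5$ for all $t$, and your argument never reaches the point where Theorem~\ref{thm2} can be invoked. More generally, the stable manifold of the splay state furnishes a whole family of initial data for which $d_{\min}$ never drops below $2\pi/3$. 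Your instability heuristic for the splay configuration is therefore not enough: it is unstable, but not repelling. A further, unaddressed, issue is that the hypothesis \eqref{mk3} of Theorem~\ref{theorem3} has $\alpha_3$ in the numerator, whereas \eqref{mk2} in Theorem~\ref{thm2} has $\beta$; you never specify $\beta$ or verify that \eqref{mk3} implies the instance of \eqref{mk2} you need.

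\textbf{What the paper does instead.} The paper bypasses the sector-trapping/close-pair mechanism entirely and never invokes Theorem~\ref{thm2}. It works with the diameter $D(\Theta(t))$ in $\mathbb{R}$ (not on the circle) and shows it is uniformly bounded, after which Lemma~\ref{lemdiameter} gives frequency synchronization directly. The key idea is a barrier placed at $D=4n\pi+\alpha_3$ for $n$ large enough that the first hitting time exceeds a transient $T_1+\gamma$ during which velocities have already relaxed (your Step~1). At that barrier, with $\theta_{i_1}-\theta_{i_2}\approx 4n\pi+\alpha_3$, the coupling term $F_3$ for the extremal pair decomposes as
\[
F_3=\tfrac{1}{3}\Big(-2\sin(\theta_{i_1}-\theta_{i_2})-2\sin\tfrac{\theta_{i_1}-\theta_{i_2}}{2}\cos\big(\theta_{i_3}-\tfrac{\theta_{i_1}+\theta_{i_2}}{2}\big)\Big),
\]
and the $N=3$ algebra is decisive: the first summand is $\approx -2\sin\alpha_3$ while the second is bounded in magnitude by $2\sin(\alpha_3/2)$, regardless of where $\theta_{i_3}$ sits. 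With $\alpha_3=2\pi/5$ (chosen to optimize this trade-off over a $\gamma$-window, which is the actual origin of $\lambda_3$ and of \eqref{mk3}) one gets $F_3\le -\tfrac{4}{3}\sin\tfrac{\alpha_3}{16}\cos\tfrac{11\alpha_3}{16}<0$ on $[t_0-\gamma,t_0]$, and \eqref{difference0} then yields $\dot\theta_{i_1}(t_0)-\dot\theta_{i_2}(t_0)<0$. Hence $D(\Theta(t))$ never crosses the barrier, and boundedness follows. This argument needs no information about $\theta_{i_3}$ and in particular is indifferent to splay-type configurations.
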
 

We remark that Theorem \ref{theorem3} does not require any condition on $(\Theta(0),\dot\Theta(0))$. 
This is referred to as the \emph{unconditional} synchronization.
  To the best of our knowledge, this is the first result of the \emph{unconditional} complete-frequency synchronization for the second order Kuramoto model of  \emph{non-identical} three oscillators, i.e.,  \eqref{kuramoto2nd} with $N=3$. 
 It is naturally conjectured, by numerical experiments, that the Kuramoto oscillators  will frequency-synchronize \emph{unconditionally} when $D(\Omega)/K$ is small. This has been an open question (even for the first order model  $m=0$), and up to date, no satisfactory theory has been established.

 In general, it is numerically demonstrated that  synchronization happens only when $D(\Omega)/K$ is small, i.e., in the case of either 
small frequency detuning
or large coupling.
In light of this, the coupling strength condition \eqref{domega3} in Theorem~\ref{theorem3} is a natural condition. 

%

Mathematically, the second order Kuramoto model is more difficult than the first order model to analyze. As we mentioned earlier, one difficulty arises from  collision of oscillators. We note that the second order model may have multiple collisions, similarly as the  first order model of nonidentical oscillators. 
Due to the collision, some specific techniques including the ordinary differential inequality for the diameter function that have been successfully used for the first order models, do not seem directly applicable to the second order models. However, the method we employ here works out even in the presence of collisions, which are mostly the cases.

\section{Complete  synchronization}

The non-identical second order Kuramoto system with uniform inertia $m>0$ reads
\begin{equation}
\label{kuramoto1}
m \ddot{\theta}_i + \dot{\theta}_i = \omega_i + \frac{K}{N}
\sum_{j=1}^{N}\sin(\theta_j - \theta_i),  \,\, t > 0, \,\, i
=1,2,3,\cdots N.
\end{equation}

Let 
$\omega$ be the mean of $\Omega=\{\omega_i\}_{i=1}^N$ and $\theta(t)$ be the mean of $\Theta(t)=\{ \theta_i(t)\}_{i=1}^N$, i.e., 
\begin{align*}
\label{meanomega}
 & \omega := \frac{1}{N}\sum_{i=1}^N \omega_i \qquad \text{ and }\\
 & \theta(t) := \frac{1}{N}\sum_{i=1}^N \theta_i(t).
\end{align*}
Then it is easy to see that $\theta(t)$ satisfies 
\begin{equation}
m\ddot{\theta} + \dot{\theta} = \omega.
\end{equation}
Integrating it over $(0,t)$, 
we see that
\begin{equation}
\dot{\theta}(t) = \dot{\theta}(0)e^{-\frac{1}{m}t} + \omega(1 - e^{-\frac{1}{m}t}),
\end{equation}
which implies that 
\begin{equation}
\sup_{t \ge 0} | \dot{\theta}(t) | \le |\dot{\theta}(0) | + |\omega|, 
\end{equation}
and
\begin{equation}
\lim_{t \to \infty} \dot{\theta}(t) = \omega.
\end{equation}
 We  observe that by the change of variables:
 \begin{equation}
 \begin{aligned}
 & \bar{\theta}_i(t) = \theta_i(t) - \omega t, \\
 & \bar{\omega}_i = \omega_i -\omega,
 \end{aligned}
 \end{equation}
 for $ i = 1,2,3, \cdots, N,$ system \eqref{kuramoto1}  can be rewritten as 
\begin{equation}
\label{kuramoto2}
m \ddot{\bar{\theta}}_i + \dot{\bar{\theta}}_i = \bar{\omega}_i + \frac{K}{N}
\sum_{j=1}^{N}\sin(\bar{\theta}_j - \bar{\theta}_i),  \,\, t > 0, \,\, i
=1,2,3,\cdots N.
\end{equation}
By a direct calculation, we have
 \begin{align}
\label{ddthe}\dot{ \bar{\theta}}_i(t) - \dot{\bar{\theta}}_j(t) = \dot{\theta}_i(t) - \dot{\theta}_j(t),\\
\label{dthe}\bar{\theta}_i(t) - \bar{\theta}_j(t) = \theta_i(t) - \theta_j(t),
 \end{align}
and
\begin{equation}
\label{barmean}
\sum_{i=1}^N \bar{\omega}_i = 0.
\end{equation}
The relations \eqref{ddthe} and \eqref{dthe} reveal that the  problems to show  (frequency/phase)  synchronization   of the two systems \eqref{kuramoto1} and \eqref{kuramoto2} are equivalent. By \eqref{barmean}, we see that when considering the synchronization problem, without loss of generality, we may always assume the mean zero condition on frequency, i.e.,
\begin{equation}
\label{meanzerof}
\omega= \frac{1}{N} \sum_{i=1}^N \omega_i =0.
\end{equation}

 \begin{lemma}
 \label{meanphase}
 Assume the mean zero condition on frequency \eqref{meanzerof} holds,
 and $\Theta(t)$ is the solution of   system \eqref{kuramoto1} with the initial conditions $\Theta(0)$ and $\dot{\Theta}(0)$. Then, $\theta(t)$ is uniformly bounded for all $t\ge0$. More precisely, the following inequality holds 
 \begin{equation}
 \label{center1}
 \sup_{t \ge 0}|\theta(t)| \le |\theta(0)| + m |\dot{\theta}(0)|.
 \end{equation}
 \end{lemma}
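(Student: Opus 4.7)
The plan is to exploit the explicit linear ODE that the mean phase $\theta(t)$ satisfies. Summing \eqref{kuramoto1} over $i$ from $1$ to $N$ and dividing by $N$, the double sum $\sum_{i,j}\sin(\theta_j-\theta_i)$ vanishes by antisymmetry, so one obtains
\[
 m\ddot\theta(t) + \dot\theta(t) = \omega = 0,
\]
where the last equality uses the mean zero hypothesis \eqref{meanzerof}. This is a first-order linear ODE in $\dot\theta$.

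Next I would integrate it explicitly. Multiplying by the integrating factor $e^{t/m}$ gives $\frac{d}{dt}(e^{t/m}\dot\theta) = 0$, so $\dot\theta(t) = \dot\theta(0) e^{-t/m}$. A second integration from $0$ to $t$ yields
\[
 \theta(t) = \theta(0) + m\dot\theta(0)\bigl(1 - e^{-t/m}\bigr).
\]

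Taking absolute values and using $0 \le 1 - e^{-t/m} \le 1$ for all $t \ge 0$ gives
\[
 |\theta(t)| \le |\theta(0)| + m|\dot\theta(0)|\bigl(1 - e^{-t/m}\bigr) \le |\theta(0)| + m|\dot\theta(0)|,
\]
which is \eqref{center1}. Since this is a direct calculation with the decoupled mean ODE, there is no real obstacle; the only small point to verify is the cancellation of the nonlinear coupling after summation, which follows at once from $\sin(\theta_j-\theta_i) = -\sin(\theta_i-\theta_j)$.
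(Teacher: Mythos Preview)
Your proof is correct and follows essentially the same approach as the paper: both derive the explicit formula $\theta(t) = \theta(0) + m\dot\theta(0)(1 - e^{-t/m})$ from the mean equation and read off the bound. You simply supply a few more details (the antisymmetry cancellation and the integrating-factor step) that the paper leaves implicit.
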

 \begin{proof}
 Solving the mean equation of \eqref{kuramoto1} for $\theta(t)$ with initial conditions $\theta(0)$ and $\dot{\theta}(0)$ gives
 \begin{equation}
 \theta(t) = \theta(0) +m \dot{\theta}(0)(1-e^{-\frac{1}{m}t}),
 \end{equation}
 which leads to the desired inequality.
 \end{proof}
 
\noindent We now prove Lemma~\ref{lemma1}.

\noindent \emph{The proof of Lemma~\ref{lemma1}}

For $i,j \in \{1,2,3,\cdots,N \},$ by \eqref{kuramoto1}, we see that
\begin{equation}
\label{difference0}
\begin{aligned}
\dot{\theta}_i(t) - & \dot{\theta}_j(t) =(\dot{\theta}_i(0) - \dot{\theta}_j(0)) e^{-\frac{1}{m}
t} + (\omega_i - \omega_j)(1-
e^{-\frac{1}{m}t}) \\
& + \int_0^t \frac{K}{mN} e^{-\frac{1}{m}(t -s)} \sum_{k=1}^N
\Big( \sin(\theta_k(s) - \theta_i(s))-  \sin(\theta_k(s) -
\theta_j(s))  \Big) ds.
\end{aligned}
\end{equation}
We notice that, in any case, we have
\begin{align*}
\Big| \int_{0}^{t} \frac{K}{mN} & e^{-\frac{1}{m}(t -s)} \sum_{k=1}^N
\Big( \sin(\theta_k(s) - \theta_i(s))-  \sin(\theta_k(s) -
\theta_j(s))  \Big) ds \Big| \\
& \le 2K \int_0^{t} \frac{1}{m} e^{-\frac{1}{m} (t-s)} ds  \\
&\le 2K(1 - e^{-\frac{1}{m} t}) .
\end{align*}
Hence,  for  any   $i, j \in \{1,2,3, \cdots, N \}$ , we have 
\begin{equation}
\label{difference1}
| \dot{\theta}_i(t) -  \dot{\theta}_j(t) | \le | \dot{\theta}_i(0) - \dot{\theta}_j(0)|  e^{-\frac{1}{m}
t}+ (| \omega_i - \omega_j |+2K)(1 - e^{-\frac{1}{m} t})  \\ 
\end{equation}
for all $t \ge 0.$
\qed

Similarly as in the proof of Lemma~\ref{lemma1}, we can derive the following lemma.
\begin{lemma}
\label{lemmathetai}
For any $i \in \{1,2,3,\cdots,N \}$, the solution $\Theta(t)$ of \eqref{kuramoto1} satisfies
\begin{equation}
\label{thetadoti}
| \dot{\theta}_i(t)  | \le | \dot{\theta}_i(0)|  e^{-\frac{1}{m}
t}+ (| \omega_i |+K)(1 - e^{-\frac{1}{m} t})  \\ 
\end{equation}
for all $t \ge 0.$
\end{lemma}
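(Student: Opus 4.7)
The plan is to follow exactly the same template as the proof of Lemma~\ref{lemma1}, but applied to a single oscillator instead of a difference of two. That is, I would treat \eqref{kuramoto1} as a first order linear ODE in $\dot{\theta}_i(t)$ with a bounded inhomogeneity.

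First, I would use the integrating factor $e^{t/m}$ on
\[
m\ddot{\theta}_i + \dot{\theta}_i = \omega_i + \frac{K}{N}\sum_{j=1}^{N}\sin(\theta_j - \theta_i),
\]
to rewrite the equation as $\frac{d}{dt}\bigl(e^{t/m}\dot{\theta}_i(t)\bigr) = \frac{1}{m}e^{t/m}\bigl(\omega_i + \frac{K}{N}\sum_{j=1}^{N}\sin(\theta_j - \theta_i)\bigr)$. Integrating from $0$ to $t$ and solving for $\dot{\theta}_i(t)$ yields the Duhamel-type identity
\[
\dot{\theta}_i(t) = \dot{\theta}_i(0)\,e^{-\frac{1}{m}t} + \omega_i\bigl(1-e^{-\frac{1}{m}t}\bigr) + \int_0^t \frac{K}{mN}\,e^{-\frac{1}{m}(t-s)} \sum_{j=1}^{N}\sin(\theta_j(s) - \theta_i(s))\,ds,
\]
entirely analogous to \eqref{difference0}.

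Next, I would bound the nonlinear integral term. Since $|\sin(\theta_j - \theta_i)| \le 1$, the average $\bigl|\frac{1}{N}\sum_{j=1}^{N}\sin(\theta_j(s)-\theta_i(s))\bigr| \le 1$, so
\[
\Bigl|\int_0^t \frac{K}{mN}\,e^{-\frac{1}{m}(t-s)} \sum_{j=1}^{N}\sin(\theta_j(s)-\theta_i(s))\,ds\Bigr| \;\le\; K\int_0^t \frac{1}{m}e^{-\frac{1}{m}(t-s)}\,ds \;=\; K\bigl(1-e^{-\frac{1}{m}t}\bigr).
\]
Combining this with the triangle inequality applied to the Duhamel identity gives
\[
|\dot{\theta}_i(t)| \le |\dot{\theta}_i(0)|\,e^{-\frac{1}{m}t} + |\omega_i|\bigl(1-e^{-\frac{1}{m}t}\bigr) + K\bigl(1-e^{-\frac{1}{m}t}\bigr),
\]
which is exactly \eqref{thetadoti}.

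There is no real obstacle here: the only observation needed beyond Lemma~\ref{lemma1} is the trivial uniform bound of the coupling average by $K$ (rather than by $2K$ as used for the difference $\dot{\theta}_i - \dot{\theta}_j$). Consequently the entire argument is three lines of integration plus one triangle inequality, and the proof the authors will present is almost certainly the one sketched above.
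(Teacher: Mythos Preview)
Your proposal is correct and matches the paper's approach exactly: the paper does not even spell out a separate proof but simply states that the result follows ``similarly as in the proof of Lemma~\ref{lemma1},'' which is precisely the integrating-factor/Duhamel argument you wrote out, with the coupling average bounded by $K$ instead of $2K$.
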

 Plugging \eqref{thetadoti} into \eqref{kuramoto1}, we obtain the following lemma.
 \begin{lemma}
\label{lemmathetai2}
For any $i \in \{1,2,3,\cdots,N \}$, the solution of \eqref{kuramoto1} satisfies
\begin{equation}
\label{thetadoti2}
m| \ddot{\theta}_i(t)  | \le | \dot{\theta}_i(0)|  e^{-\frac{1}{m}
t}+ (| \omega_i |+K)(1 - e^{-\frac{1}{m} t}) + |\omega_i|+K  \\ 
\end{equation}
for all $t \ge 0.$
\end{lemma}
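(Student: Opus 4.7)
The plan is to take Lemma~\ref{lemmathetai} as a black box bound for $|\dot\theta_i(t)|$ and combine it with a trivial bound on the coupling term. Concretely, I would rearrange the Kuramoto equation \eqref{kuramoto1} as
\begin{equation*}
m\ddot\theta_i(t) = -\dot\theta_i(t) + \omega_i + \frac{K}{N}\sum_{j=1}^{N}\sin(\theta_j(t)-\theta_i(t)),
\end{equation*}
then apply the triangle inequality to the right-hand side.

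The first step is to note that the coupling sum is a priori bounded, namely
\begin{equation*}
\left|\frac{K}{N}\sum_{j=1}^{N}\sin(\theta_j(t)-\theta_i(t))\right| \le K,
\end{equation*}
since $|\sin|\le 1$ and there are $N$ terms divided by $N$. The second step is to bound $|\omega_i|$ by itself, giving
\begin{equation*}
m|\ddot\theta_i(t)| \le |\dot\theta_i(t)| + |\omega_i| + K.
\end{equation*}
The third step is to insert the pointwise bound \eqref{thetadoti} from Lemma~\ref{lemmathetai} for $|\dot\theta_i(t)|$, which immediately yields the claimed inequality \eqref{thetadoti2}.

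There is no real obstacle here: the statement is a straight consequence of the already-proved bound on $|\dot\theta_i(t)|$ and the uniform $L^\infty$ control of $\sin$. The only minor point to state cleanly is that the inequality holds for \emph{all} $t\ge 0$ because both \eqref{thetadoti} and the bound on the coupling term are uniform in $t$, so no Gronwall-type or collision-handling argument is needed.
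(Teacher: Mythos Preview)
Your proposal is correct and matches the paper's own argument: the paper simply says ``Plugging \eqref{thetadoti} into \eqref{kuramoto1}, we obtain the following lemma,'' which is exactly the triangle-inequality-plus-$|\sin|\le 1$ computation you spell out.
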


 One of the key features of the present work is, by using an appropriate Lyapunov functional and energy method, to establish a simple sufficient condition for the frequency synchronization, that is the uniform boundedness of diameter function, i.e., $\sup_{t>0} D(\Theta(t) )<\infty$.
 This concept has been successfully applied to the first order Kuramoto models in literature. For the application to the second order Kuramoto models, it has not yet been so well understood. We shall give a complete account of this idea. Namely, we shall build up Lemma~\ref{lemdiameter}, 
 which gives a handy criterion for the asymptotic complete frequency synchronization for the second order Kuramoto system \eqref{kuramoto1}. However, to bridge the gap between the assumptions of Theorem~\ref{thm2} and Lemma~\ref{lemdiameter}, we need a delicate idea which is illustrated by Lemma~\ref{lemma8}.

We define a functional measuring a total frequency energy of the oscillators in the square summation sense:
 \begin{align*}
 E_N(t) := 
 \sum_{ i =1}^{ N} | \dot{\theta}_{i}(t)  |^2.
\end{align*}
Making use of this, and by multiplying \eqref{kuramoto1} by  $\dot{\theta}_i$, summing over  $i=1,\cdots,N$, and integrating it over $[0,t]$, we obtain
\begin{equation}\label{m-km}
\frac{m}{2} E_N(t) + \int_0^t E_N (s) ds =  \frac{m}{2} E_N(0) + \int_0^t \Lambda_N(s) ds + \int_0^t H_N(s) ds,
\end{equation}
\begin{align}
&\Lambda_N(t) : = \sum_{i=1}^N\omega_i \dot{\theta}_i(t), \\ 
&H_N(t) := \frac{K}{N}\sum_{i,j=1}^N \sin(\theta_j -\theta_i)\dot{\theta}_i(t).
\end{align}
Here, by using symmetry of $H_N$, we find that 
\begin{equation}\label{hh}
H_N(t)  =    \frac{K}{N} \frac{d}{dt} \sum_{1\le i<j\le N} \cos(\theta_j-\theta_i)(t).
\end{equation}
In order  to prove the frequency  synchronization, we find out that it suffices to establish the uniform bounds for the right-hand side of \eqref{m-km}, i.e., 
\begin{equation}\label{bdds}
\sup_{t>0} \big|\int_0^t \Lambda_N (s)ds \big| <\infty \text{ and  } \sup_{t>0} \big| \int_0^t H_N(s)ds  \big|<\infty.
\end{equation}
It is obvious that
\begin{equation}
\label{hbounded}
\big| \int_0^tH_N(s)ds \big|= \big| \frac{K}{N}\sum_{i < j}\cos(\theta_j -\theta_i)\Big|_{0}^t \big| \le (N-1)K
\end{equation}
for all $t \ge 0.$
In fact, if \eqref{bdds} holds, the following limit exists and it is finite:
 $$\sup_{t>0} \int_0^t E_N(s) ds = \int_0^\infty E_N(s) ds <\infty.$$ 
 By Lemma~\ref{cvl} in the appendix, this together with the uniform continuity of $E_N(t)$ on $[0,\infty)$ implies that $E_N(t)\to 0$ as $t\to\infty$. This immediately deduces $\dot{\Theta}(t) \to 0$ as $t\to\infty$, which implies the complete frequency synchronization. 
 We notice that the uniform continuity of $E_N(t)$ is a direct result of Lemma~\ref{lemmathetai} and Lemma~\ref{lemmathetai2}.
 Namely, by the above argument, we obtain   the following   criterion for the complete frequency  synchronization for the second order Kuramoto system \eqref{kuramoto1}.
  \begin{lemma}
\label{thmnonidentical}
If $\Theta(t)$ is a solution of \eqref{kuramoto1} under the condition \eqref{meanzerof} such that    
\begin{equation}\label{bdd1}
\sup_{t\ge0} \big|\int_0^t \Lambda_N(s)ds \big| < M,
\end{equation}
for some $M>0$,
then 
\begin{equation*}
\lim_{t \to \infty} \dot\Theta(t) =0.
\end{equation*}
\end{lemma}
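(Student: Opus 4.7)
The plan is to execute the argument already outlined in the paragraph preceding the lemma, making each of the three ingredients precise. I would start from the energy identity \eqref{m-km}, which rewrites the problem as a control on $\int_0^t E_N(s)\,ds$ in terms of $\int_0^t \Lambda_N$ and $\int_0^t H_N$. The first term is uniformly bounded in $t$ by hypothesis \eqref{bdd1}, while the second is uniformly bounded in $t$ by the telescoping identity \eqref{hh} and the elementary estimate \eqref{hbounded}. Since $\tfrac{m}{2}E_N(t) \ge 0$, rearranging \eqref{m-km} yields
\begin{equation*}
\int_0^t E_N(s)\,ds \;\le\; \frac{m}{2}E_N(0) + M + (N-1)K
\end{equation*}
for every $t \ge 0$. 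Because $E_N \ge 0$ is monotone in the upper limit when integrated, the monotone convergence theorem gives $\int_0^{\infty} E_N(s)\,ds < \infty$.

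Next I would show that $E_N$ is uniformly continuous on $[0,\infty)$, so that the integral convergence can be upgraded to a pointwise limit. Differentiating $E_N(t)=\sum_i \dot\theta_i(t)^2$ gives $\dot E_N(t) = 2\sum_i \dot\theta_i(t)\,\ddot\theta_i(t)$, and the two factors are bounded uniformly in $t$ by Lemma~\ref{lemmathetai} and Lemma~\ref{lemmathetai2} respectively (the mean-zero normalization \eqref{meanzerof} makes $|\omega_i|$ a harmless constant). Hence $\dot E_N$ is uniformly bounded, so $E_N$ is Lipschitz, hence uniformly continuous.

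With $\int_0^\infty E_N(s)\,ds < \infty$ and $E_N$ uniformly continuous, the convergence lemma from the appendix (Lemma~\ref{cvl}, which is Barbalat's lemma) yields $E_N(t) \to 0$ as $t \to \infty$. Since $E_N(t) = \sum_{i=1}^N |\dot\theta_i(t)|^2$ is a sum of nonnegative terms, each $\dot\theta_i(t) \to 0$, i.e.\ $\dot\Theta(t) \to 0$, which is exactly the conclusion.

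There is no real obstacle here; the statement is designed precisely so that the two integrals on the right of \eqref{m-km} are controlled (one by hypothesis, the other by the automatic bound \eqref{hbounded}). The only care-point is to invoke the right Barbalat-type statement: one needs both the integrability of $E_N$ on $[0,\infty)$ and its uniform continuity, and the latter is what forces the use of Lemma~\ref{lemmathetai2} for the second derivative bound. Everything else is bookkeeping on the already-displayed energy identity.
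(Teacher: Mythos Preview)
Your proposal is correct and follows essentially the same route as the paper: the argument outlined in the paragraph preceding the lemma \emph{is} the paper's proof, and you have reproduced it faithfully, including the use of \eqref{m-km}, \eqref{hbounded}, the uniform continuity of $E_N$ via Lemmas~\ref{lemmathetai}--\ref{lemmathetai2}, and the appeal to Lemma~\ref{cvl}.
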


However, in general, it is not easy to show the inequality \eqref{bdd1}.   Instead of proving the inequality \eqref{bdd1} directly, we shall show that the diameter function $D(\Theta(t))$ is uniformly bounded, i.e.,
\begin{equation}
\label{diameterbound}
\sup_{t \ge 0} D(\Theta(t)) < \infty.
\end{equation}
We notice that under the mean zero condition on the frequency \eqref{meanzerof}, the inequality  \eqref{center1} of Lemma~\ref{meanphase} holds. A simple observation reveals that  \eqref{diameterbound} together with \eqref{center1} implies
\eqref{bdd1}. We therefore obtain the following useful criterion for the frequency synchronization.
\begin{lemma}
\label{lemdiameter}
Under the assumption \eqref{meanzerof}, if $\Theta(t)$ is a solution of \eqref{kuramoto1}  such that    \eqref{diameterbound} holds,
then 
\begin{equation*}
\lim_{t \to \infty} \dot\Theta(t) =0.
\end{equation*}
\end{lemma}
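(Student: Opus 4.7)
The plan is to reduce Lemma~\ref{lemdiameter} to Lemma~\ref{thmnonidentical}, which already says that the conclusion $\lim_{t\to\infty}\dot\Theta(t)=0$ follows once we know $\sup_{t\ge 0}\bigl|\int_0^t \Lambda_N(s)\,ds\bigr|<\infty$. So the entire task is to extract this uniform bound from the two pieces of information at hand: the hypothesis \eqref{diameterbound} that the diameter $D(\Theta(t))$ stays bounded, and Lemma~\ref{meanphase} which, under \eqref{meanzerof}, gives the uniform bound \eqref{center1} for the mean $\theta(t)$.

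The first step is to rewrite $\Lambda_N$ as a total derivative. Since $\Lambda_N(t)=\sum_i \omega_i \dot\theta_i(t)=\frac{d}{dt}\sum_i \omega_i \theta_i(t)$, integration gives
\begin{equation*}
\int_0^t \Lambda_N(s)\,ds \;=\; \sum_{i=1}^N \omega_i\bigl(\theta_i(t)-\theta_i(0)\bigr).
\end{equation*}
Thus controlling $\int_0^t\Lambda_N(s)\,ds$ uniformly reduces to controlling each $\theta_i(t)$ uniformly in $t$.

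The second step is the triangle-inequality decomposition $\theta_i(t)=\theta(t)+\bigl(\theta_i(t)-\theta(t)\bigr)$. The first summand is bounded uniformly by \eqref{center1}. For the second summand, one writes
\begin{equation*}
\theta_i(t)-\theta(t) \;=\; \frac{1}{N}\sum_{j=1}^N \bigl(\theta_i(t)-\theta_j(t)\bigr),
\end{equation*}
so $|\theta_i(t)-\theta(t)|\le D(\Theta(t))\le \sup_{s\ge 0} D(\Theta(s))$, which is finite by \eqref{diameterbound}. Combining the two estimates yields a uniform bound $|\theta_i(t)|\le C$ for each $i$, and hence
\begin{equation*}
\sup_{t\ge 0}\Bigl|\int_0^t \Lambda_N(s)\,ds\Bigr|\;\le\;\sum_{i=1}^N |\omega_i|\bigl(|\theta_i(t)|+|\theta_i(0)|\bigr)\;<\;\infty.
\end{equation*}
With this bound in hand, one simply invokes Lemma~\ref{thmnonidentical} to conclude $\lim_{t\to\infty}\dot\Theta(t)=0$.

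There is essentially no serious obstacle in this proof: every ingredient has already been assembled earlier in the section, and the argument is exactly the "simple observation" alluded to just above the statement. The only thing to be careful about is that the mean-zero hypothesis \eqref{meanzerof} is used twice—once to justify Lemma~\ref{meanphase} and once implicitly when passing between $\sum_i \omega_i \theta_i$ and the bounded quantities—so we should flag this hypothesis at the outset of the proof.
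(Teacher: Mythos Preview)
Your proposal is correct and is exactly the ``simple observation'' the paper alludes to in the paragraph preceding the lemma: combine Lemma~\ref{meanphase} (which needs \eqref{meanzerof}) with the diameter bound to control each $\theta_i(t)$, then write $\int_0^t\Lambda_N=\sum_i\omega_i(\theta_i(t)-\theta_i(0))$ and invoke Lemma~\ref{thmnonidentical}. One small cosmetic point: in your last displayed inequality the right-hand side still contains $t$ while the left is a supremum over $t$, so replace $|\theta_i(t)|$ there by the uniform bound $C$ you just established.
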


We now prove Lemma~\ref{lem3}.

\noindent \emph{Proof of Lemma~\ref{lem3}.}

\noindent{\sc Step 1.}
Applying Lemma~\ref{lemma1} and taking \eqref{domega2} and \eqref{lambda2} into consideration, we see
for any $m, n \in \{1,2,3,\cdots,M \}$, that
\begin{align}
\label{max2} | \dot{\theta}_m(t) - \dot{\theta}_n(t)| &\le  | \dot{\theta}_m(0) - \dot{\theta}_n(0)|  
+ (| \omega_m - \omega_n |+2K)\\
\nonumber &\le    (\lambda + \mu +2) K
\end{align}
for all $t \ge 0.$
Set
\begin{equation}
\label{tau2}
\tau := \frac{\beta}{ 4(\lambda + \mu +2) K},
\end{equation}
where $\beta$ is the number defined in Lemma~\ref{lem3}. 
By \eqref{alpha2} and \eqref{tau2}, it follows from \eqref{max2} that  we have
\begin{equation}
D(\Theta_M(t)) < \pi - \alpha  \quad \text{  for  all } 0\le t \le 4 \tau.
\end{equation}

\noindent{\sc  Step 2.}
For any moment $t \ge \tau$ such that 
$$ \pi-\alpha-\beta \le D(\Theta_{M}(t))  \le \pi - \alpha,$$
let $\theta_i(t) - \theta_j(t)$ be one of the choices that realizes 
$D(\Theta_M(t)),$ i.e., 
\begin{equation}
\label{diameter2}
\theta_i(t) - \theta_j(t) = D(\Theta_{M}(t)).
\end{equation}
By \eqref{max2}--\eqref{diameter2}, we see that for $t - \tau \le s \le t,$  
$$ \theta_i(t) - \theta_j(t) - \frac{\beta}{4} \le \theta_i(s) - \theta_j(s) \le \theta_i(t) - \theta_j(t) + \frac{ \beta}{4},$$
and
\begin{align*}
& \Big| \theta_k(s) - \frac{\theta_i(s)+ \theta_j(s)}{2} \Big|   \le   \Big| \theta_k(t) -\frac{1}{2}( \theta_i(t) + \theta_j(t)) \Big| + \frac{ \beta}{4} \le  \frac{1}{2}( \theta_i(t) - \theta_j(t))+\frac{\beta}{4},
\end{align*}
for $ k \in \{1,2, \cdots, M\}$.
Hence, one has 
\begin{align}
 \label{ss1} \sin(\frac{\theta_i(s) - \theta_j(s)}{2})&\ge \sin( \frac{1}{2}( \theta_i(t) - \theta_j(t)) - \frac{\beta}{8}) \ge \sin(\frac{\pi}{2}- \frac{\alpha}{2} -\frac{\beta}{2}- \frac{\beta}{8})\\
 \nonumber& \ge \cos(\frac{\alpha}{2}+\frac{5}{8}\beta),\\
\label{ss15} \sin(\frac{\theta_i(s) - \theta_j(s)}{2})&\le \sin( \frac{1}{2}( \theta_i(t) - \theta_j(t)) + \frac{\beta}{8}) \le \sin( \frac{\pi}{2}-\frac{\alpha}{2} + \frac{\beta}{8})\\
\nonumber & \le \cos(\frac{\alpha}{2}-\frac{\beta}{8}),
  \end{align}
 and
 \begin{align}
 \label{ss2}&   \cos (\theta_k(s) - \frac{\theta_i(s)+ \theta_j(s)}{2} ) \ge \cos( \frac{1}{2}( \theta_i(t) - \theta_j(t))+\frac{\beta}{4}) \\
\nonumber &\qquad \qquad \ge \cos(\frac{\pi}{2} - \frac{\alpha}{2} + \frac{\beta}{4})=\sin(\frac{\alpha}{2} - \frac{\beta}{4}),
\end{align}
for $ k \in \{1,2, \cdots, M\}$.
Using   \eqref{ss1}-\eqref{ss2}, we have that,
 for $t - \tau \le s \le t$, 
\begin{align}\label{fn}
F_N(s)  &= \frac{1}{N}\sum_{k=1}^N\Big( \sin (\theta_k(s) - \theta_i(s)) - \sin (\theta_k(s) - \theta_j(s))   \Big)\\
\nonumber& =-\frac{2}{N} \sin(\frac{\theta_i(s) - \theta_j(s)}{2})\Big( \sum_{k =1}^M \cos(\theta_k(s) - \frac{\theta_i(s) + \theta_j(s)}{2})
\\
\nonumber& \qquad+  \sum_{k =M+1}^N \cos(\theta_k(s) - \frac{\theta_i(s) + \theta_j(s)}{2})\Big)\\
\nonumber&\le -\frac{2}{N} \Big( M\sin(\frac{\alpha}{2} - \frac{\beta}{4}) \cos(\frac{\alpha}{2}+\frac{5}{8}\beta)-(N-M)\cos(\frac{\alpha}{2} - \frac{\beta}{8})\Big)
 \Big)\\
\nonumber &\le -2\mu.
\end{align}
Here we have used an elementary trigonometric identity for the second equality, i.e.,
\begin{multline*}
 \sin (\theta_k(s) - \theta_i(s)) - \sin (\theta_k(s) - \theta_j(s))  
 \\
  =- 2 \sin(\frac{\theta_i(s) - \theta_j(s)}{2}) \cos(\theta_k(s) - \frac{\theta_i(s) + \theta_j(s)}{2}).
 \end{multline*} 
Using \eqref{fn} together with the fact that $|F_N(s)|\le2$ for all $s\ge0$, we have
\begin{align*}
\int_0^t \frac{1}{m} e^{-\frac{1}{m}(t -s)} F_N(s) ds &=  \int_{t -\tau}^t \frac{1}{m} e^{-\frac{1}{m}(t -s)} F_N(s) ds + \int_0^{t -\tau} \frac{1}{m} e^{-\frac{1}{m}(t -s)} F_N(s) ds\\
&\le  -2\mu(1 -e^{-\frac{1}{m}\tau})
+2e^{-\frac{1}{m}\tau}.
\end{align*}
Making use of this for \eqref{difference0}, we obtain
\begin{align}
\label{decay}\dot{\theta}_i(t) -  \dot{\theta}_j(t)&=(\dot{\theta}_i(0) - \dot{\theta}_j(0)) e^{-\frac{1}{m}
t} + (\omega_i - \omega_j)(1-
e^{-\frac{1}{m}t}) \\
\nonumber & + K\int_0^t \frac{1}{m} e^{-\frac{1}{m}(t -s)} F_N(s) ds\\
\nonumber&\le K\Big( \lambda e^{-\frac{1}{m}t} + \mu (1 - e^{-\frac{1}{m}t})    -2\mu(1 -e^{-\frac{1}{m}\tau})
+2e^{-\frac{1}{m}\tau} \Big)\\
\nonumber &\le K(\lambda e^{-\frac{1}{m}\tau}+\mu-2\mu(1 -e^{-\frac{1}{m}\tau})
+2e^{-\frac{1}{m}\tau}  ) \\
\nonumber &\le K(-\mu+(\lambda + 2 \mu +2)e^{-\frac{1}{m} \tau})\\
\nonumber&<0,
\end{align}
where we have used \eqref{tau2} and \eqref{mk2} for the derivation of the last inequality.
By \eqref{decay}, we have 
$$
\sup_{t \ge 0} D(\Theta_M(t)) \le \pi -\alpha.
$$
Moreover, we have
$$
D(\Theta_M(t)) \le \pi- \alpha-\beta
$$
for all
$$
t > \tau + \frac{\beta}{K(\mu-(\lambda + 2 \mu +2)e^{-\frac{1}{m} \tau})}.
$$
This completes the proof.
\qed

\begin{remark}
\label{rm1}
Assume the conditions \eqref{mu2}--\eqref{domega2} hold and $\Theta(t)$ is a solution of \eqref{kuramoto2nd}. 
Suppose at certain moment $t=t_0$ and for some $\theta_* \in \mathbb R$, the $M$ oscillator $\Theta_M(t_0)$ happen to be located in a sector \begin{equation}
S:= \Big\{  \theta + 2k \pi :  \big| \theta - \theta_* \big| \le \frac{1}{2}( \pi - \alpha-\beta)  \text{ and }  k \in \mathbb Z\Big\}\\
\end{equation}
of argument $\pi - \alpha - \beta$ and 
$$
 D(\dot{\Theta}_{M}(t_0)) < \lambda K.
 $$

 One can choose appropriate   $\{k_i\}_{i=1}^M \subset \mathbb{Z}$,   $\bar{c} \in \mathbb R$, and $\bar{\theta}_i \in [0, \pi-\alpha-\beta]$  so that
\begin{equation}
\theta_i(t_0) = \bar{\theta}_i+\bar{c}+2k_i\pi,
\end{equation}
for $i=1,2,3,\cdots,M.$ Now, let $\hat{\Theta}(t)$ be the solution to \eqref{kuramoto1}  with the initial conditions
\begin{equation}
 \hat{\theta}_i(0)= \left\{ \begin{aligned}
 & \bar{\theta}_i   &&\text{ for } i=1,2,\cdots,M, \\
 & \theta_i(t_0)-\bar{c} &&\text{ for } i=M+1,M+2,\cdots,N, \end{aligned}
 \right.  
\end{equation}
and
\begin{equation}
 \dot{\hat{\theta}}_i(0)= \dot{\theta}_i(t_0) \qquad \text{ for } i=1,2,3\cdots,N.  
\end{equation}
By the uniqueness of the solution to  \eqref{kuramoto1}, we see that
 for all $t \ge t_0$
 \begin{equation}
 \theta_i(t) = \left\{
 \begin{aligned}
 & \hat{\theta}_i(t-t_0)+\bar{c}+2k_i\pi &&\text{ for } i=1,2,3,\cdots,M,\\
 & \hat{\theta}_i(t-t_0)+\bar{c}  && \text{ for } i=M+1,M+2,\cdots,N. 
  \end{aligned}
  \right.
 \end{equation}
 On the other hand, we see that $\hat{\Theta}(t)$ satisfies the conditions \eqref{alpha2} - \eqref{lambda2}. Hence, by Lemma~\ref{lem3}, we obtain that
 \begin{equation*}
D(\hat{\Theta}_M(t))\le \pi - \alpha
\end{equation*}
 for all $t \ge 0$, and
 \begin{equation}
 D(\hat{\Theta}_M(t))\le  \pi -\alpha -\beta
  \end{equation}
 for all 
 $$
t > \tau + \frac{\beta}{K(\mu-(\lambda + 2 \mu +2)e^{-\frac{1}{m} \tau})},
$$
 which implies that
\begin{enumerate}
\item[(a)] $\Theta_M(t)$ will remain in a moving sector of argument $ \pi-\alpha-\beta $ in the phase space $S^1$ for all 
$$
t > t_0+ \tau + \frac{\beta}{K(\mu-(\lambda + 2 \mu +2)e^{-\frac{1}{m} \tau})}.$$
\item[(b)] $D(\Theta_M(t))$ is uniformly bounded for all $t \ge 0.$
\end{enumerate}
 
\end{remark}

\begin{lemma}
\label{lemma8}
Under the assumptions \eqref{mu2}- \eqref{domega2}, suppose $\Theta(t)$ is a solution of \eqref{kuramoto2nd} that satisfies \eqref{alpha2}-\eqref{lambda2}. Then we have
\begin{equation}
\sup_{t \ge 0} D(\Theta(t)) < \infty.
\end{equation}
\end{lemma}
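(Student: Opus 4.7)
My strategy is to combine the trapping of the majority cluster (Lemma~\ref{lem3}) with a pendulum-style restoring-force analysis for each outlier. Without loss of generality (cf.\ \eqref{barmean}) I assume the mean-zero condition \eqref{meanzerof}, so in particular $|\omega_i|<\mu K$ for every $i$. Lemma~\ref{lem3} applies directly under \eqref{mu2}--\eqref{lambda2} and yields $\sup_{t\ge 0}D(\Theta_M(t))\le\pi-\alpha$; thus the first $M$ oscillators stay in a sector of width at most $\pi-\alpha$ for all time, and it remains only to bound $\psi_k(t):=\theta_k(t)-\theta_1(t)$ uniformly in $t$ for every $k>M$.

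First, Lemma~\ref{lemma1} applied to the pair $(k,1)$ gives the uniform velocity bound $|\dot\theta_k(t)-\dot\theta_1(t)|\le |\dot\theta_k(0)-\dot\theta_1(0)|+|\omega_k-\omega_1|+2K$, which alone only controls $\psi_k$ linearly in $t$. To upgrade this to a uniform position bound, I would use the integral representation \eqref{difference0} with indices $(k,1)$ together with the product-to-sum identity
$$\sin(\theta_l-\theta_k)-\sin(\theta_l-\theta_1)=-2\sin(\psi_k/2)\cos\bigl(\theta_l-\tfrac{\theta_k+\theta_1}{2}\bigr),$$
and split the sum over $l$ into trapped indices $l\le M$ and outlier indices $l>M$, exactly as in \eqref{fn}. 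Once $\Theta_M$ is trapped inside a sector of width $\le\pi-\alpha-\beta$ (which holds after the explicit time in Lemma~\ref{lem3}), the $l\le M$ contribution provides a restoring force whose strength dominates, by the arithmetic inequality \eqref{mu3} together with $M>N/2$, both the detuning $|\omega_k-\omega_1|\le\mu K$ and the worst-case $N-M$ destabilizing terms. Mimicking the decay calculation \eqref{decay} then shows that whenever $\psi_k$ attempts to grow past an explicit threshold while remaining bounded away from odd multiples of $\pi$, the derivative $\dot\psi_k$ is driven strictly negative, precluding monotone drift.

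\textbf{The main obstacle}, and what requires new work beyond Lemma~\ref{lem3}, is ruling out the possibility that $\theta_k$ \emph{winds} around the trapped cluster---that is, $\psi_k$ passes through an odd multiple of $\pi$ where the sign of the restoring force flips. I would exclude winding by combining the uniform kinetic bound above with an energy-type estimate modelled on \eqref{m-km}--\eqref{hh}: under the coupling--inertia smallness assumption \eqref{mk2}, the kinetic energy available to $\psi_k$ remains strictly below the ``potential barrier'' of height of order $K$ produced by the majority cluster, so $\psi_k$ cannot traverse a full period $2\pi$. Together with the threshold argument this confines $\psi_k$ to a bounded interval for all $t\ge0$, and combined with $D(\Theta_M)\le\pi-\alpha$ yields $\sup_{t\ge0}D(\Theta(t))<\infty$, completing the proof.
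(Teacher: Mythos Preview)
Your plan correctly isolates the main difficulty---the possibility that an outlier $\theta_k$ with $k>M$ winds around the trapped majority cluster---but the proposed resolution has a genuine gap. The energy-barrier argument you sketch requires the kinetic energy of $\psi_k$ to lie below a potential barrier of order $K$; however, the hypotheses \eqref{alpha2}--\eqref{lambda2} impose \emph{no constraint whatsoever} on $\dot\theta_k(0)$ for $k>M$, so the initial kinetic energy of $\psi_k$ can be arbitrarily large. You might try to wait until Lemma~\ref{lemma1} forces $|\dot\psi_k|\le(\mu+2)K$, but by then $\psi_k$ may already have wound many times, and even afterwards the detuning $\omega_k-\omega_1$ continuously injects energy; a damped--driven pendulum can admit a stable running solution, and your sketch does not identify which quantitative hypothesis excludes this. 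The appeal to \eqref{m-km}--\eqref{hh} is also misplaced: that identity concerns the \emph{total} energy $E_N$ and becomes useful only \emph{after} one already knows $\sup_t D(\Theta(t))<\infty$ (via Lemma~\ref{lemdiameter}); it is the destination, not a tool for reaching it.

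The paper's argument bypasses all of this with a much simpler bootstrap. If $\theta_k(t)-s(t)$ (with $s(t)=\min_{i\le M}\theta_i(t)$) were unbounded, then by continuity there is a sequence of times at which $\theta_k-s$ crosses successive integer multiples of $2\pi$. Choose such a crossing time $t_*$ \emph{after} Lemma~\ref{lemma1} has already driven $D(\dot\Theta(t))<\lambda K$. At $t=t_*$ the outlier lies (mod $2\pi$) inside the majority sector, so the enlarged ensemble $\Theta_{M+1}$ satisfies the hypotheses of Lemma~\ref{lem3} at that instant---note the right-hand side of \eqref{mu3} is monotone increasing in $M$, so \eqref{mu3} still holds with $M$ replaced by $M+1$. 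By Remark~\ref{rm1}, $\theta_k$ is then trapped with the majority for all later times, contradicting further winding. No pendulum-energy estimate is needed: the trapping lemma itself, reapplied to a larger cluster, does all the work.
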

\begin{proof}
By \eqref{alpha2} and in view of Remark~\ref{rm1}, without loss of generality, we may  assume 
\begin{equation*}
\begin{aligned}
0 \le \theta_i(0) &\le \pi-\alpha-\beta  &&{for }\, i=1,2,3\cdots,M\\
0 \le \theta_i(0) &\le 2\pi &&{for }\, i=M+1,M+2,M+3\cdots,N.
\end{aligned}
\end{equation*}  
By Lemma~\ref{lem3}, we see that there exists a moving sector $S(t)=[s(t), s(t)+\pi-\alpha-\beta]$ of argument $\pi-\alpha-\beta$ that   covers all the oscillators of $\Theta_M(t)$ for all $$t > t_0:=\tau + \frac{\beta}{K(\mu-(\lambda + 2 \mu +2)e^{-\frac{1}{m} \tau})},$$ where $s(t)$ is the continuous function defined  by
$$
s(t) := \min\{ \theta_1(t), \theta_2(t), \cdots, \theta_M(t) \}.
$$
 Applying Lemma~\ref{meanphase} and Lemma~\ref{lem3}, we see that if $D(\Theta(t))$ is not  bounded, there exists at least one oscillator  $\theta_k(t) \in \{ \theta_{M+1}(t), \theta_{M+2}(t), \cdots, \theta_N(t)\}$ such that  $\theta_k(t) - s(t)$ is an unbounded continuous function.   
Without loss of generality, we may assume $k=M+1$ and  there exists a strictly increasing sequence $\{t_j\}_{j=1}^{\infty}$ with $t_1>t_0$ 
and
\begin{equation}
\label{tinfinity}
\lim_{j \to \infty} t_j = \infty
\end{equation}
such that
\begin{equation}
\label{increase}
\theta_{M+1}(t_j) - s(t_j) = \theta_{M+1}(t_0) - s(t_0)+2j\pi,
\end{equation}
for $j=1,2,3,\cdots.$
By Lemma~\ref{lemma1}, \eqref{tinfinity} and the fact $\lambda > \mu +2$, we see that there exists a positive integer $j_*$ such that 
for all $t \ge t_{j_*}$, we have
\begin{equation}
\label{speeddifferencefinite}
\begin{aligned}
| \dot{\theta}_i(t) -  \dot{\theta}_j(t) |& \le | \dot{\theta}_i(0) - \dot{\theta}_j(0)|  e^{-\frac{1}{m}
t}+ (| \omega_i - \omega_j |+2K)(1 - e^{-\frac{1}{m} t})\\
& < \lambda K,
\end{aligned}
\end{equation}
for all $i,j =1,2,3, \cdots,N.$
By the continuity of $\theta_{M+1}(t) - s(t)$, we see that there exists $t_{*} \in [t_{j_*}, t_{j_*+1}]$ such that 
\begin{equation*}
\theta_{M+1}(t_*) - s(t_*) =2m\pi
\end{equation*}
for some integer $m$. 
 Since $D(\Omega_{M+1}) \le D(\Omega) < \mu K$ and the condition \eqref{mu3} holds by replacing $M$ by $M+1$, taking \eqref{speeddifferencefinite} into account, applying Lemma~\ref{lem3}(c.f. Remark~\ref{rm1}), we see that $$\theta_{M+1}(t) \in [s(t)+2m\pi-(\pi-\alpha-\beta), s(t)+2m\pi+(\pi-\alpha-\beta)]$$
 for all 
$
t > t_0 + t_*,
$
which violates \eqref{increase}. We therefore conclude that
$$
\sup_{t \ge 0}D(\Theta(t)) < \infty.
$$
\end{proof}

Now we use the previous Lemmas to prove  Theorem~\ref{thm2} - \ref{thm3}: 

{\sc Proof of Theorem~\ref{thm2}}
 The result is a direct consequence of Lemma~\ref{lemdiameter}, Lemma~\ref{lem3} and Lemma~\ref{lemma8}.
  
\qed
 
{\sc Proof of Theorem~\ref{thm3}.}\\
\noindent Suppose $\Theta(t)$ is a solution that satisfies all the conditions described in Theorem~\ref{thm3}.
Applying Theorem~\ref{thm2}, we see that
\begin{equation}
\label{dotzero}
\lim_{t \to \infty} \dot{\Theta}(t) = 0.
\end{equation}
On the other hand, by taking derivatives on \eqref{kuramoto1} and applying Lemma~\ref{lemmathetai}   and Lemma~\ref{lemmathetai2}, we obtain the uniform boundedness of all third derivatives of $\Theta(t)$. This together with 
\eqref{dotzero} and Lemma~\ref{cvl2} in the appendix gives
\begin{equation}
\label{ddotzero}
\lim_{t \to \infty} \ddot{\Theta}(t) = 0.
\end{equation}
Applying Lemma~\ref{lem3}, we see that 
\begin{equation}
\label{bibibamp}
D(\Theta(t) ) \le \pi- \alpha-\beta < \pi
\end{equation}
for all 
$$
t > \tau + \frac{\beta}{K(\mu-(\lambda + 2 \mu +2)e^{-\frac{1}{m} \tau})},
$$
Plugging \eqref{dotzero} and \eqref{ddotzero} into the limiting equation of \eqref{kuramoto1} and taking \eqref{bibibamp} into account, we see that
\begin{equation}
\lim_{t \to \infty}\sin (\theta_i(t) - \theta_j(t)) =0,
\end{equation}
for $i,j \in \{1,2,3, \cdots, N\}$, and hence
\begin{equation}
\lim_{t \to \infty} (\theta_i(t) - \theta_j(t)) =0,
\end{equation}
for $i,j \in \{1,2,3, \cdots, N\}$.
 This completes the proof of Theorem~\ref{thm3}. 

\qed

\section{Unconditional frequency synchronization} 
This section is devoted to the proof of Theorem~\ref{theorem3}.
To prove Theorem~\ref{theorem3}, it is sufficient to  show that for any given initial conditions $\Theta(0)$ and $\dot{\Theta}(0),$
the diameter function  $D(\Theta(t))$ of the solution $\Theta(t)$ of \eqref{kuramoto1} is bounded for all $t > 0,$i.e.,
$$
\sup_{t \ge 0} D(\Theta(t)) < \infty.
$$ 

First, we choose a large positive number $T_1$ such that 
\begin{equation}
D(\dot{\Theta}(0)) e^{-\frac{1}{m}T_1} < \frac{1}{10} \sin \frac{\alpha_3}{16} \cos \frac{11 \alpha_3}{16} K.
\end{equation}
Hence, by  Lemma~\ref{lemma1} together with \eqref{beta3} and \eqref{domega3}, we have
\begin{equation}
\label{speed1}
D(\dot{\Theta}(t)) \le \big(\sin \frac{\alpha_3}{16} \cos \frac{11 \alpha_3}{16}+2\big) K
\end{equation}
for all $t \ge T_1.$
We see also from Lemma~\ref{lemma1} that
\begin{equation}
\label{speed2}
D(\dot{\Theta}(t)) \le D(\dot{\Theta}(0)) + D(\Omega)+2K
\end{equation}
for all $t \ge 0.$
 We then set
\begin{equation}
\label{gamma1}
\gamma = \frac{\alpha_3}{4  \big(\sin \frac{\alpha_3}{16} \cos \frac{11 \alpha_3}{16}+2\big) K }.
\end{equation}
From now on, we fix a  positive integer $n$ that satisfies
\begin{equation}
\label{nbound}
\frac{4n\pi + \alpha_3 - D(\Theta(0))}{D(\dot{\Theta}(0)) + D(\Omega)+2K} > T_1 +2\gamma.
\end{equation}
{
If there exists a moment such that $D(\Theta(t)) \ge 4n\pi + \alpha_3$, by \eqref{speed2} and \eqref{nbound}, the first moment
that $D(\Theta(t))$ hits  $4n\pi + \alpha_3$ is greater than $T_1+\gamma.$ 
}Let $t_0 > T_1+\gamma$ be the first moment
that $D(\Theta(t))$ hits  $4n\pi + \alpha_3,$ and assume that $\theta_{i_1}(t_0) - \theta_{i_2}(t_0)$ is one of the representations of $D(\Theta(t_0))$, i.e., 
 \begin{equation}
\label{diameterm}
D(\Theta(t_0))= \theta_{i_1}(t_0) - \theta_{i_2}(t_0) = 4n\pi + \alpha_3.
\end{equation} 
By \eqref{speed1} and \eqref{gamma1}, we have, for $t_0 - \gamma \le s\le t_0$, 
\begin{align*}
&4n\pi + \frac{3}{4}\alpha_3 \le \theta_{i_1}(s) - \theta_{i_2}(s) \le 4n\pi + \frac{5}{4}\alpha_3,\\
&2n\pi + \frac{3}{8}\alpha_3 \le \frac{\theta_{i_1}(s) - \theta_{i_2}(s)}{2} \le 2n\pi + \frac{5}{8}\alpha_3.
\end{align*}
Since $\alpha_3 = \frac{2\pi}{5},$ we have 
\begin{align*}
& 0<\sin\frac{3 \alpha_3}{4} \le \sin(\theta_{i_1}(s) - \theta_{i_2}(s)) \le \sin \frac{5 \alpha_3}{4},\\
& 0<\sin\frac{3 \alpha_3}{8} \le \sin(\frac{\theta_{i_1}(s) - \theta_{i_2}(s)}{2}) \le \sin \frac{5 \alpha_3}{8},
\end{align*}
and hence, for $t_0 - \gamma \le s\le t_0,$
\begin{align}
\label{fs3}F_3(s)  & = \frac{1}{3}\Big(  -2 \sin (\theta_{i_1}(s) - \theta_{i_2}(s)) \\
\nonumber&\,\,\,-2 \sin(\frac{\theta_{i_1} (s)- \theta_{i_2}(s)}{2})\cos(\theta_{i_3}(s) - \frac{\theta_{i_1}(s) + \theta_{i_2}(s)}{2}) \Big)\\
\nonumber &\le \frac{1}{3}\Big( -2 \sin \frac{3\alpha_3}{4} + 2 \sin \frac{5\alpha_3}{8} \Big) \\
\nonumber & \le- \frac{4}{3} \sin \frac{\alpha_3}{16} \cos \frac{11 \alpha_3}{16}.
\end{align}
By \eqref{difference0}, we have
 \begin{align}
 \label{bound3}\dot{\theta}_{i_1}(t_0)& -  \dot{\theta}_{i_2}(t_0) = (\dot{\theta}_{i_1}(0) - \dot{\theta}_{i_2}(0))e^{-\frac{1}{m}
t_0}+(\omega_i - \omega_j) (1-
e^{-\frac{1}{m}t_0}) \\
\nonumber&+\int_{t_0-\gamma}^{t_0} \frac{K}{m} e^{-\frac{1}{m}(t_0 -s)} F_3(s) ds + \int_{0}^{t_0-\gamma} \frac{K}{m} e^{-\frac{1}{m}(t_0 -s)} F_3(s) ds\\
\nonumber&\le K\Big( \frac{1}{10}\sin \frac{\alpha_3}{16} \cos \frac{11 \alpha_3}{16} +\frac{9}{10}\sin \frac{\alpha_3}{16} \cos \frac{11 \alpha_3}{16}  \\
\nonumber&\,\,  -\frac{4}{3}\sin \frac{\alpha_3}{16} \cos \frac{11 \alpha_3}{16}(1 -e^{-\frac{\gamma}{m}}) + 2e^{-\frac{\gamma}{m}} \Big) \\
\nonumber&\le K\Big((2 + \frac{4}{3}\sin \frac{\alpha_3}{16} \cos \frac{11 \alpha_3}{16})e^{-\frac{\gamma}{m}} -\frac{1}{3}\sin \frac{\alpha_3}{16} \cos \frac{11 \alpha_3}{16}\Big)\\
\nonumber& < 0,
\end{align}
where we have employed \eqref{mk3} and \eqref{gamma1} in the derivation of the last inequality.
Since \eqref{bound3} holds true for any choice of the representation for the diameter function $D(\Theta(t))$, we conclude  
that
\begin{equation}
D(\Theta(t)) \le 4n\pi+\alpha_3
\end{equation}
for all $t > T_1 + \gamma.$ This gives the global boundedness of $D(\Theta(t))$. Thanks to Lemma~\ref{lemdiameter}, the proof of Theorem~\ref{theorem3} is complete.  
\qed



\section{Appendix}\label{app}
In this appendix, we present some elementary lemmas that are used in our main theorems.
\begin{lemma}\label{cvl}
Let $f\in C(0,\infty)$ be uniformly continuous and $f\ge0$ on $(0,\infty)$. Assume further 
$\int_0^\infty f(s) ds <\infty$. Then we have that
$f(s)\to 0 \text{ as } s\to\infty.$
\end{lemma}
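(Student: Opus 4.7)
The plan is to prove the contrapositive by a standard ``disjoint bumps'' argument. Suppose for contradiction that $f(s)\not\to 0$ as $s\to\infty$. Then there exist $\varepsilon_0>0$ and a sequence $s_n\to\infty$ such that $f(s_n)\ge\varepsilon_0$ for every $n$. By passing to a subsequence, I may assume $s_{n+1}>s_n+1$ (say), so the points spread out.

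Next I would use uniform continuity to inflate each $s_n$ into an interval on which $f$ stays bounded below. Since $f$ is uniformly continuous on $(0,\infty)$, there exists $\delta>0$ (which may be taken smaller than $1/2$) such that $|f(s)-f(t)|<\varepsilon_0/2$ whenever $|s-t|<\delta$. Consequently, for each $n$ and every $s\in I_n:=[s_n-\delta,s_n+\delta]\cap(0,\infty)$, one has
\begin{equation*}
f(s)\ge f(s_n)-\tfrac{\varepsilon_0}{2}\ge\tfrac{\varepsilon_0}{2}.
\end{equation*}
Because $s_{n+1}-s_n>1>2\delta$, the intervals $\{I_n\}$ are pairwise disjoint, and each has length at least $\delta$ (for $n$ large enough that $s_n-\delta>0$, which holds for all but finitely many $n$).

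Now I would estimate the integral from below using nonnegativity of $f$ and disjointness:
\begin{equation*}
\int_0^\infty f(s)\,ds\;\ge\;\sum_{n\ge N}\int_{I_n} f(s)\,ds\;\ge\;\sum_{n\ge N}\delta\cdot\tfrac{\varepsilon_0}{2}\;=\;+\infty,
\end{equation*}
contradicting the hypothesis $\int_0^\infty f(s)\,ds<\infty$. Hence $f(s)\to 0$ as $s\to\infty$.

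I do not expect any real obstacle here; the only minor bookkeeping is to choose the subsequence spacing large enough (e.g.\ bigger than $2\delta$) so that the inflated intervals do not overlap, and to discard the first few indices so that each $I_n\subset(0,\infty)$. Both are handled trivially by passing to a tail of the sequence.
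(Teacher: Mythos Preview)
Your argument is correct and follows essentially the same route as the paper's own proof: assume the conclusion fails, extract a well-separated sequence $\{s_n\}$ with $f(s_n)\ge\varepsilon_0$, use uniform continuity to get a uniform $\delta$ on which $f$ stays above $\varepsilon_0/2$, and sum the contributions of the disjoint intervals to contradict integrability. The only cosmetic difference is that the paper bounds $\int_{I_n} f$ via $\int f(t_n)\,dt-\int|f(t)-f(t_n)|\,dt\ge \delta_0\varepsilon_0$, whereas you bound $f$ pointwise by $\varepsilon_0/2$ and integrate; these are the same estimate.
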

\begin{proof}
Suppose that the statement is not true. Then, for some $\varepsilon_0>0$, there exists a sequence $\{t_n\}_{1}^\infty\subset [1,\infty)$ such that $f(t_n)\ge \varepsilon_0$ and $t_{n+1} -t_n\ge 1$.
Since $f$ is uniformly continuous, there exists $\delta_0\in(0,1/2)$ such that $|f(t) -f(s)|<\varepsilon_0/2$ whenever $|t-s|<\delta_0$. Now it is easy to see that $$\int_{t_n-\delta_0}^{t_n+\delta_0} f(t) dt \ge \int_{t_n-\delta_0}^{t_n+\delta_0} f(t_n) dt - \int_{t_n-\delta_0}^{t_n+\delta_0} |f(t) - f(t_n) | dt \ge \delta_0 \varepsilon_0>0.$$
This implies that 
$\int_0^\infty f(t) dt\ge \sum_{n=1}^\infty \int_{t_n-\delta_0}^{t_n+\delta_0} f(t) dt =\infty$, which is a contradiction. We are done.
\end{proof}

\begin{lemma}\label{cvl2}
Let $f\in C^2(0,\infty)$. Suppose that  $\lim_{t\to\infty} f(t) =0$
and $\sup_{t\ge0} |f'(t)| + |f''(t) |\le M$ for some $M>0$. Then $\lim_{t\to\infty} f'(t) =0$.
\end{lemma}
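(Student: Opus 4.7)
The plan is to argue by contradiction, following the same style as Lemma~\ref{cvl} just above. Suppose $f'(t)\not\to 0$ as $t\to\infty$. Then there exist $\varepsilon_0>0$ and a sequence $t_n\to\infty$ (which, after passing to a subsequence, we may take to satisfy $t_{n+1}-t_n\ge 1$) such that $|f'(t_n)|\ge \varepsilon_0$ for every $n$.

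The key ingredient is the uniform bound $|f''|\le M$, which makes $f'$ Lipschitz with constant $M$. Thus for any $t$ with $|t-t_n|\le \delta$, we have $|f'(t)-f'(t_n)|\le M\delta$. Choosing $\delta=\varepsilon_0/(2M)$, we obtain $|f'(t)|\ge \varepsilon_0/2$ on the interval $I_n:=[t_n,t_n+\delta]$, and moreover $f'$ has a constant sign on $I_n$ (the sign of $f'(t_n)$). Hence
\begin{equation*}
|f(t_n+\delta)-f(t_n)|=\left|\int_{t_n}^{t_n+\delta} f'(s)\,ds\right|\ge \frac{\varepsilon_0}{2}\cdot\delta=\frac{\varepsilon_0^2}{4M}.
\end{equation*}

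On the other hand, $\lim_{t\to\infty} f(t)=0$ forces $f(t_n+\delta)-f(t_n)\to 0$ as $n\to\infty$, contradicting the fixed positive lower bound $\varepsilon_0^2/(4M)$ established above. Therefore $\lim_{t\to\infty} f'(t)=0$.

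There is no real obstacle here; the only thing to keep clean is the constant-sign argument on $I_n$, which is immediate from the Lipschitz estimate on $f'$ once $\delta$ is chosen small enough relative to $\varepsilon_0$ and $M$. The lemma is precisely the tool needed in the proof of Theorem~\ref{thm3} to upgrade $\lim_{t\to\infty}\dot\Theta(t)=0$ to $\lim_{t\to\infty}\ddot\Theta(t)=0$, since the uniform bound on the third derivative of $\Theta(t)$ (obtained by differentiating \eqref{kuramoto1} and invoking Lemmas~\ref{lemmathetai} and \ref{lemmathetai2}) supplies the required bound on $f''=\dddot\theta_i$ with $f=\ddot\theta_i$.
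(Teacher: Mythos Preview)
Your proof is correct and follows essentially the same contradiction argument as the paper: use the bound $|f''|\le M$ to make $f'$ Lipschitz, so that $|f'|\ge\varepsilon_0/2$ on an interval of fixed length $\delta=\varepsilon_0/(2M)$ around each $t_n$, then integrate to obtain a uniform positive lower bound on $|f(t_n+\delta)-f(t_n)|$, contradicting $f\to 0$. The only cosmetic differences are that the paper uses a two-sided interval (yielding $\varepsilon_0^2/(2M)$ instead of your $\varepsilon_0^2/(4M)$) and disposes of the sign issue by passing, without loss of generality, to a subsequence with $f'(t_n)\ge\varepsilon_0$.

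One small slip in your closing remark on the application: to pass from $\dot\Theta\to 0$ to $\ddot\Theta\to 0$ in Theorem~\ref{thm3} you must take $f=\dot\theta_i$, not $f=\ddot\theta_i$; then the hypothesis $f\to 0$ is \eqref{dotzero}, the bound on $f''=\dddot\theta_i$ comes from differentiating \eqref{kuramoto1}, and the conclusion $f'=\ddot\theta_i\to 0$ is exactly \eqref{ddotzero}.
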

\begin{proof}
Suppose that the statement is not true. Then, for some $\varepsilon_0>0$, there exists a sequence $\{t_n\}_{1}^\infty\subset [1,\infty)$ such that $|f' (t_n)|\ge \varepsilon_0$ and $t_{n+1} -t_n\ge 1$. Without loss of generality, we may assume that $f'(t_n) \ge \varepsilon_0$. 
By the condition $|f''(t) | \le M$, we have 
$$f'(t_n + \tau ) \ge \frac{\varepsilon_0}{2}$$
if $|\tau|\le \frac{\varepsilon_0}{2M}$. Then we have
$$f(t_n + \frac{\varepsilon_0}{2M}) - f(t_n - \frac{\varepsilon_0}{2M}) = \int_{-\frac{\varepsilon_0}{2M}}^{\frac{\varepsilon_0}{2M}} f'(t_n + \tau ) d\tau \ge \frac{\varepsilon_0^2}{2M} >0.  $$
This contradicts to $\lim_{t \to \infty}f(t)=0$. The proof is completed.
\end{proof}

  \section*{Acknowledgments.}
C.-Y. J. was supported by the Basic Science Research Program through the National Research Foundation of Korea funded by the Ministry of Education(2015R1D1A1A01059837). B. K. was supported by Basic Science Research Program through the National Research Foundation of Korea(NRF) funded by the Ministry of science, ICT and future planning(2015R1C1A1A02037662).


\end{document}